\newtheorem{thm}{Theorem}
\newtheorem{lemma}{Lemma}
\newtheorem{remark}{Remark}
\DeclareMathOperator\pr{pr}
\DeclareMathOperator\tr{tr} 
\DeclareMathOperator\Eucl{Eucl} 
\DeclareMathOperator\diam{diam}
\DeclareMathOperator\Bl{Bl}
\DeclareMathOperator\rad{rad}
\DeclareMathOperator\GH{GH}
\begin{document}
\bibliographystyle{amsplain}

\title{The K\"ahler-Ricci flow, holomorphic vector fields and Fano bundles}
\author{Xi Sisi Shen}
\date{}  
\maketitle
\begin{abstract}
We study the behavior of the K\"ahler-Ricci flow on compact manifolds developing finite-time singularities, in particular, when the flow contracts exceptional divisors or collapses the Fano fibers of a holomorphic fiber bundle. We present a technique using holomorphic vector fields to prove estimates related to the work of Song-Weinkove and Fu-Zhang.
\end{abstract}
\section{Introduction}
The Ricci flow, originally introduced by Hamilton \cite{hamilton82}, now serves as an essential tool in geometry. One of its most celebrated applications is in Perelman's ground-breaking resolution of the Poincar\'e and Geometrization conjectures \cite{perelman1,perelman3,perelman2} following Hamilton's program of Ricci flow with surgery. The Ricci flow starting from a K\"ahler metric remains K\"ahler along the flow and is thus referred to as the K\"ahler-Ricci flow. It was used by Cao \cite{cao85} to give a parabolic proof of the existence of K\"ahler-Einstein metrics on manifolds of negative first Chern class,  a result originally proved by Aubin \cite{aubin78} and Yau \cite{yau78}, and on manifolds of zero first Chern class, originally proved by Yau \cite{yau78}. There has since been extensive work done to better understand the behavior of the K\"ahler-Ricci flow in the general type case \cite{guo17,gsw16,tian-zhang06,tian-zhang16,tsuji88} in the Calabi-Yau fiber case \cite{fong-zhang15,gill14,song-weinkove12,song-tian07,twy14} as well as in the Fano case \cite{chen-tian06,chen-tian02,pssw09,pssw11,phong-sturm06,pssw08,sesum-tian08,szekelyhidi10,tian-zhu07,tian-zhu13,tzzz13,tian-zhang-fano}. Furthermore, the behavior of finite-time singularities along the K\"ahler-Ricci flow has been studied in \cite{fik03,fong14,fu-zhang17,song14,ssw13,st17,sw11,song-weinkove13,tian10} and references therein. 

The focus of this paper is on the behavior of the K\"ahler-Ricci flow on compact manifolds developing certain finite-time singularities. We consider, specifically, when the flow contracts exceptional divisors or collapses the Fano fibers of a holomorphic fiber bundle. We make use of an observation of \cite{song-weinkove13} that the quantity $|V|^2_{\omega(t)}$ satisfies a maximum principle for $V$ a holomorphic vector field and $\omega(t)$ a solution of the K\"ahler-Ricci flow. In the first part of the paper, we give an exposition of the estimates from \cite{song-weinkove13} along the flow as it contracts exceptional divisors. In the later sections, we consider the case where the flow collapses the fibers of a holomorphic fiber bundle where the fiber is $\mathbb{P}^m$ blown up at one point for $m\ge 2$, clarifying a result by \cite{fu-zhang17}. We prove that if the initial metric lies in a suitable K\"ahler class, then the flow collapses the fiber in finite time and the metrics converge along a subsequence in the Gromov-Hausdorf sense to a metric on the base.

We now describe our results more precisely. Let $(X,\omega_0)$ be a compact K\"ahler manifold of complex dimension $n\ge 2$, where $\omega_0 = \sqrt{-1}(g_0)_{i\bar{j}}dz^i \wedge d\bar{z}^j$ is the associated K\"ahler form to a K\"ahler metric $g_0$ on $X$. The K\"ahler-Ricci flow $\omega=\omega(t)$ satisfies
\begin{align*}
\frac{\partial}{\partial t}\omega = -\text{Ric}(\omega), \ \  \ \omega(0)=\omega_0
\end{align*}
where $\text{Ric}(\omega) = -\sqrt{-1}\partial\bar{\partial}\log \text{det} g$ and $g$ is the K\"ahler metric associated to $\omega$. From here on, we will also refer to $\omega$ as a K\"ahler metric.

It is well-known that the first singular time of the flow is given by
\begin{align*}
T = \text{sup}\{t>0 : [\omega_0]-Tc_1(X)>0\},
\end{align*}
where $c_1(X)=[\text{Ric}(\omega)]$ is the first Chern class of $X$ and $[\omega_0]-tc_1(X)$ is the K\"ahler class of $\omega(t)$.

Using the K\"ahler property and evolution equation of the K\"ahler-Ricci flow, we can bound the norm of holomorphic vector fields with respect to the evolving metric. Specifically, for a holomorphic vector field $V$ on $X$ and evolving metric $\omega(t)$, Song-Weinkove \cite{song-weinkove13} observed that
\begin{align}
(\partial_t-\Delta)\log(|V|^2_{\omega(t)})\le 0.\label{holoc_vf_bound}
\end{align}
This implies that the quantity $|V|^2_{\omega(t)}$ must achieve a maximum at $t=0$ or on the space boundary, simplifying the work to cases where we typically have more control. Bounds on $|V|^2_{\omega(t)}$ give us control of the evolving metric in the directions of $V$.

In Section 3, we will consider the behavior of the K\"ahler-Ricci flow on a manifold with disjoint, irreducible exceptional divisors arising from blowing up distinct points. An exceptional divisor arising from blowing up a point $y$ is given by the subvariety $E=\pi^{-1}(y)$ which is biholomorphic to $\mathbb{P}^{n-1}$ and represents all the directions through $y$, where we write $\pi:X\rightarrow Y$ for the blow-up map. Let $\omega(t)$ be a smooth solution to the K\"ahler-Ricci flow for $t\in [0,T)$ with the assumption that $T<\infty$ on a manifold $X$ with disjoint, irreducible exceptional divisors $E_1,\ldots, E_k$. Assume there exists a blow-up map $\pi: X\rightarrow Y$, where $(Y,\omega_Y)$ is a smooth compact manifold, such that $\pi(E_i)=y_i\in Y$ and that the initial K\"ahler class satisfies
\begin{align*}
[\omega_0]-2Tc_1(X) = [\pi^* \omega_Y].
\end{align*}
Song-Weinkove proved in \cite{song-weinkove13} that $\omega(t)$ converges in the Gromov-Hausdorff sense to a metric space $(Y,d_T)$ where $d_T$ is the metric extending $(\pi^{-1})^* g_T$ to be 0 on $\{y_1,\ldots,y_k\}$ and that $\omega(t)$ converges smoothly away from the exceptional divisors. In particular, they show that the diameter of $(X,\omega(t))$ is uniformly bounded for all $t\in [0,T)$. We present an exposition of their diameter estimate using local holomorphic vector fields, Equation \eqref{holoc_vf_bound}, and the fact that we have uniform estimates on compact subsets away from the divisor. While the proof of the diameter estimate in \cite{song-weinkove13} uses Equation \eqref{holoc_vf_bound} to obtain bounds on the radial holomorphic vector field for the purposes of bounding the lengths of radial paths emanating from the blow-up point, we will demonstrate that indeed Equation \eqref{holoc_vf_bound} can be used to show all of the crucial estimates. The main idea is to choose the holomorphic vector fields so that the bounds we get from the norm of these holomorphic vector fields are precisely in the directions that we need.

In Section 4, we will prove estimates for the case of collapsing the fibers of a Fano bundle. We consider the case where the fibers are projective space blown up at a point, $\Bl_p\mathbb{P}^m$, which are obtained from $\mathbb{P}^m$ by replacing a point $p\in\mathbb{P}^m$ with a subvariety $E$ biholomorphic to $\mathbb{P}^{m-1}$. We establish a diameter bound and convergence rate of the fibers, and show Gromov-Hausdorf convergence of the manifold with respect to the evolving metric subsequentially to the base manifold, building on results by \cite{fu-zhang17}. We require the fibers to be Fano to ensure that the K\"ahler-Ricci flow collapses the fiber in finite time. We begin by showing the necessary estimates when the manifold is a product manifold. From there, we demonstrate how our method for product manifolds can be rather straightforwardly adapted to the case of fiber bundles trivialized over Zariski open sets on a projective manifold in Section 5.

\begin{thm}
Let $\rho:X\rightarrow B$ be a Fano bundle trivialized over Zariski open sets on $B$, a compact, projective manifold, with Fano fiber $F \cong \Bl_p \mathbb{P}^m$.
Let $\omega(t)$ be a smooth solution to the K\"ahler-Ricci flow for $t\in [0,T)$ for $T<\infty$ on $X$. Assume that the initial K\"ahler class satisfies \label{mainthm}
\begin{align*}
[\omega_0]-2Tc_1(X) = [\rho^*\omega_B].
\end{align*}
Then the following hold:
\begin{enumerate}
\item The diameter of $X$ with respect to the evolving metric $\omega(t)$ is uniformly bounded for all $t\in [0,T)$.
\item There exists a uniform $C$ such that for any $t\in [0,T)$, 
\begin{align*}
\diam_{\omega(t)} F\le C(T-t)^{1/5},
\end{align*}
for every fiber $F$. 
\item $(X,\omega(t))$ converges subsequentially in the Gromov-Hausdorff sense to $(B,\omega_{B,\infty})$ where $\omega_{B,\infty}$ is uniformly equivalent to $\omega_B$.
\end{enumerate}
\end{thm}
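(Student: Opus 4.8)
The plan is to reduce the bundle case to the product case treated earlier in Section 4, and then import those estimates. First I would set up the reference metrics: since $\rho:X\to B$ is trivialized over Zariski open sets $\{U_\alpha\}$ covering $B$, over each $U_\alpha$ we have a biholomorphism $\rho^{-1}(U_\alpha)\cong U_\alpha\times F$ with $F\cong\Bl_p\mathbb{P}^m$. The class condition $[\omega_0]-2Tc_1(X)=[\rho^*\omega_B]$ means that as $t\to T$ the limiting class is pulled back from the base, so the fibers are being collapsed; moreover $[\omega_0]-tc_1(X)$ stays K\"ahler for $t<T$, and one should check (as in Fu–Zhang) that on each fiber the restricted class is a positive multiple of $c_1(F)$ up to the $E$-direction, which is where the Fano hypothesis on $F$ enters to guarantee the fiber genuinely collapses at the finite time $T$. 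I would fix a smooth reference form $\hat\omega_t = \rho^*\omega_B + (T-t)\chi$ in the class $[\omega(t)]$, with $\chi$ a fixed K\"ahler form on $X$, and write $\omega(t)=\hat\omega_t+\sqrt{-1}\partial\bar\partial\varphi_t$ for the Monge–Amp\`ere potential, using the standard parabolic estimates (Song–Tian, Song–Weinkove) to get the $C^0$ bound on $\varphi$ and the resulting uniform lower bound $\omega(t)\ge c\,\rho^*\omega_B$.

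The heart of the argument is part (2), the fiber diameter bound, and this is where I would use the holomorphic-vector-field technique that is the point of the paper. On each trivializing chart $\rho^{-1}(U_\alpha)\cong U_\alpha\times F$, the holomorphic vector fields on $F$ pull back to (local, fiberwise) holomorphic vector fields $V$ on $\rho^{-1}(U_\alpha)$. Applying \eqref{holoc_vf_bound} to $\log|V|^2_{\omega(t)}$, the maximum principle forces the maximum of $|V|^2_{\omega(t)}$ to be attained at $t=0$ or on the spatial boundary of the chart — and the spatial boundary lies in a compact region over which, away from the collapsing directions, we already have uniform estimates (here one uses the smooth convergence / uniform equivalence on compact subsets of the base together with the product-case estimates of Section 4). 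This pins $|V|^2_{\omega(t)}$ on $F$ from above. Choosing $V$ to range over a spanning set of holomorphic vector fields on $\Bl_p\mathbb{P}^m$ — which acts with an open orbit away from $E$ — controls $\omega(t)|_F$ in all directions tangent to the fiber except those along $E$; the $E$-direction is then handled exactly as in the Song–Weinkove exceptional-divisor exposition of Section 3, using the radial holomorphic vector field transverse to $E$ inside the fiber. Tracking the $(T-t)$-scaling through these bounds — the fiber class shrinks like $(T-t)$, and the vector-field bound converts this, after integrating lengths of paths, into a diameter bound — yields $\diam_{\omega(t)}F\le C(T-t)^{1/5}$, the exponent $1/5$ coming from balancing the collapsing rate against the logarithmic losses in the parabolic estimates, as in Fu–Zhang.

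Given (2), part (1) follows quickly: any two points of $X$ can be joined by first moving within a fiber (length $\le C(T-t)^{1/5}\le CT^{1/5}$, uniformly bounded) and then along a path that projects to a minimizing geodesic in $(B,\omega_B)$, whose $\omega(t)$-length is controlled because $\omega(t)$ restricted to a horizontal distribution is uniformly comparable to $\rho^*\omega_B$ (from the $C^0$-potential estimate above); $\diam(B,\omega_B)<\infty$ closes the argument. For part (3), the uniform two-sided bound $c\,\rho^*\omega_B\le$ (the horizontal part of $\omega(t)$) $\le C\,\rho^*\omega_B$ together with the fiber-collapse of (2) shows the pushed-forward distance functions $d_{\omega(t)}$ are uniformly bi-Lipschitz to $d_{\omega_B}$ on $B$; by Arzel\`a–Ascoli a subsequence converges uniformly to a distance $d_\infty$ on $B$ induced by a metric $\omega_{B,\infty}$ uniformly equivalent to $\omega_B$, and the collapsing fibers mean $(X,\omega(t))\to(B,d_\infty)$ in Gromov–Hausdorff distance along that subsequence. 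I expect the main obstacle to be the boundary-control step in part (2): making rigorous that the a priori estimates valid on compact subsets over the base actually furnish the bound needed on the chart boundary, uniformly as $t\to T$, and doing the bookkeeping on the $(T-t)$-powers carefully enough to land the $1/5$ exponent — the product-manifold case of Section 4 is precisely what should make this routine rather than delicate.
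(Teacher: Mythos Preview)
Your overall architecture matches the paper's—prove everything first in the product case (Section~4), then transfer to the bundle case over each Zariski-open trivialization—but there is a genuine gap at exactly the point you flag as ``the main obstacle,'' and your proposed fix for it does not work.

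You want to apply the maximum principle to $\log|V|^2_{\omega(t)}$ on a trivializing chart $\rho^{-1}(U_\alpha)$ and say the maximum is attained at $t=0$ or on the spatial boundary, where ``we already have uniform estimates.'' But we do not. The spatial boundary of $\rho^{-1}(U_\alpha)$ is $\rho^{-1}(\partial U_\alpha)$, a union of entire fibers, and $V$ is tangent to those fibers—so $|V|^2_{\omega(t)}$ on the boundary measures the metric precisely in the collapsing directions, for which no a priori control is available. This is the crucial difference from Section~3: in the blow-down case the evolving metric is uniformly equivalent to $\omega_0$ on compact sets away from $E$, so the chart boundary is harmless; in the fiber-collapse case there is no ``good'' region away from which the problem is localized. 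Invoking ``the product-case estimates of Section~4'' on the boundary is circular, since those are the very estimates you are trying to establish.

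The paper's remedy is a barrier, not a boundary estimate. Since $B$ is projective, the complement $D=B\setminus U_\alpha$ is contained in the support of a divisor with line bundle $L$, holomorphic section $s$, and Hermitian metric $h$. One replaces $|V|^2_{\omega}$ by the globally defined quantity $|V|^2_{\omega}\,\rho^*|s|_h^{2\alpha}$, which vanishes along $\rho^{-1}(D)$ and hence has no boundary contribution. The extra term satisfies
\[
(\partial_t-\Delta)\log\rho^*|s|_h^{2\alpha}
= -\alpha\,\Delta\log\rho^*|s|_h^{2}
\le C\,\tr_{\omega}\rho^*\omega_B
\le C,
\]
the last step using $\omega\ge c\,\rho^*\omega_B$, so the maximum principle still closes (after absorbing $Ct$). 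One then restricts to $U_{1/2}=\{|s|_h^2>1/2\}$, where the barrier factor is harmless, and covers $B$ by finitely many such sets. This is the missing idea; once you insert it, the rest of your outline (the $(T-t)^{1/5}$ bookkeeping, the base diameter via a Schwarz-type trace estimate, and the diagonal/Arzel\`a--Ascoli argument for Gromov--Hausdorff convergence) lines up with the paper.
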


\begin{remark}
Fu and Zhang assert a proof of the diameter bounds and convergence of the metrics in \cite{fu-zhang17} using similar arguments to those in  \cite{song-weinkove13}. However, in their proof of Lemma 3.3, their inequality in Equation (3.11) seems to implicitly assume that $[E]$ is a semi-negative line bundle.
\end{remark}

\begin{remark}
We remark that the exponent of $1/5$ appearing in the rate of convergence of the diameter of the fiber is a minor improvement to that shown in \cite{fu-zhang17} of $1/15$.
\end{remark}
 
In our proof, we require the existence of certain global holomorphic vector fields on the fiber manifold that extend to the whole manifold since we do not have uniform estimates away from the divisors and, thus, cannot simply work locally. We note that our method will allow us to bound each direction of the evolving metric explicitly rather than bounding the trace. 

\section{Preliminaries}
In this section, we cover several useful tools and establish the notation that we will use in later sections. The first key tool is the parabolic Schwarz lemma of Song-Tian \cite{song-tian07} (see also Theorem 2.6 of \cite{song-weinkove13}) which is a parabolic version of Yau's Schwarz lemma \cite{yau-schwarz}:
\begin{lemma}Let $F:X\rightarrow Y$ be holomorphic and let $\omega=\omega(t)$ be a solution to the K\"ahler-Ricci flow and $\omega_Y$ a fixed K\"ahler metric on $Y$, then we have for all points on $M\times [0,T)$ with $\tr_\omega F^*\omega_Y$ positive that
\begin{align*}
(\partial_t - \Delta)\log\tr_{\omega}F^*\omega_Y \le C\tr_{\omega}F^*\omega_Y,
\end{align*}
where $C$ is an upper bound of the bisectional curvature of $\omega_Y$.\label{psl}
\end{lemma}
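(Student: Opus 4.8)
\textbf{Proof strategy for Lemma \ref{psl} (parabolic Schwarz lemma).}

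The plan is to compute the evolution of $\tr_\omega F^*\omega_Y$ under the flow and then apply the Laplacian to its logarithm, combining the two computations. First I would set up local coordinates: holomorphic coordinates $z^i$ on $X$ and $w^\alpha$ on $Y$, writing $h = F^*\omega_Y$ for the pulled-back (semi-positive, Hermitian) form, so that $h_{i\bar j} = (g_Y)_{\alpha\bar\beta}\,F^\alpha_i\,\overline{F^\beta_j}$ where $F^\alpha_i = \partial F^\alpha/\partial z^i$ and we use that $F$ is holomorphic so the mixed derivatives $\partial_{\bar j}F^\alpha$ vanish. Set $u = \tr_\omega h = g^{i\bar j}h_{i\bar j}$, and work at a point and time where $u>0$.

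Next I would carry out the two standard computations. For the time derivative, using $\partial_t g^{i\bar j} = g^{i\bar k}g^{l\bar j}R_{k\bar l} = R^{i\bar j}$ along the Kähler–Ricci flow (and $\partial_t h_{i\bar j}=0$ since $F$ and $\omega_Y$ are time-independent), one gets $\partial_t u = R^{i\bar j}h_{i\bar j}$. For the Laplacian, the Bochner-type identity of Yau's Schwarz lemma computation gives, after contracting,
\begin{align*}
\Delta u = R^{i\bar j}h_{i\bar j} + g^{i\bar j}g^{k\bar l}\nabla_i\nabla_{\bar l}h_{k\bar j} - (\text{bisectional curvature term of } \omega_Y),
\end{align*}
where, crucially, the curvature of the target enters only through $-R^Y_{\alpha\bar\beta\gamma\bar\delta}$ contracted against the (semi-positive) tensor $g^{i\bar j}g^{k\bar l}F^\alpha_i\overline{F^\beta_j}F^\gamma_k\overline{F^\delta_l}$, which is bounded above by $C$ times the square of a norm, hence by $C u^2$ from above, where $C$ is an upper bound for the bisectional curvature of $\omega_Y$. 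The ``gradient'' term $g^{i\bar j}g^{k\bar l}\nabla_i\nabla_{\bar l}h_{k\bar j}$ is of Laplacian-of-$h$ type and will be controlled below by the Cauchy–Schwarz step. Subtracting, the leading Ricci terms $R^{i\bar j}h_{i\bar j}$ cancel between $\partial_t u$ and $\Delta u$, leaving
\begin{align*}
(\partial_t - \Delta)u \le Cu^2 - g^{i\bar j}g^{k\bar l}\nabla_i\nabla_{\bar l}h_{k\bar j}.
\end{align*}

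Then I would pass to $\log u$ via $(\partial_t-\Delta)\log u = \tfrac{1}{u}(\partial_t-\Delta)u + \tfrac{1}{u^2}|\nabla u|^2$, so it remains to show the two extra terms are dominated appropriately, i.e. that
\begin{align*}
\frac{1}{u^2}|\nabla u|^2_\omega \le \frac{1}{u^2}\, g^{i\bar j}g^{k\bar l}\nabla_i\nabla_{\bar l}h_{k\bar j}\cdot(\text{something}),
\end{align*}
more precisely that $\tfrac{1}{u^2}|\partial u|^2 - \tfrac{1}{u^2}\,g^{i\bar j}g^{k\bar l}\,\nabla_i h_{k\bar l}\,\overline{\nabla_? h_{?}}\le 0$ after an integration-by-parts/reorganization. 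This is exactly the point where Yau's original argument uses the Cauchy–Schwarz inequality: writing $\nabla_i u = g^{k\bar l}\nabla_i h_{k\bar l}$ and diagonalizing $g$ and $h$ simultaneously at the point, one checks $|\nabla u|^2 \le u\cdot g^{i\bar j}g^{k\bar l}g^{p\bar q}\nabla_i h_{k\bar q}\overline{\nabla_j h_{p\bar l}}$, and the latter is precisely what the $-\nabla\nabla h$ term produces after the $\log$ rescaling, so the negative third-derivative contributions absorb the gradient term. After this cancellation one is left with $(\partial_t-\Delta)\log u \le \tfrac{1}{u}\cdot Cu^2 = Cu = C\tr_\omega F^*\omega_Y$, which is the claim.

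The main obstacle — and the step deserving the most care — is the Cauchy–Schwarz bookkeeping: verifying that the gradient term $\tfrac{1}{u^2}|\nabla u|^2$ is genuinely absorbed by the (sign-definite) part of $-\Delta u/u$ coming from the covariant derivatives of $h$, including correctly tracking which term carries the target curvature and confirming that $h$ being only semi-positive (rather than a genuine metric) causes no trouble — it does not, since $F$ holomorphic forces $h\ge 0$ and all the inequalities used are of Cauchy–Schwarz type valid for semi-positive Hermitian forms. A secondary point is to state precisely that the inequality holds only where $u = \tr_\omega F^*\omega_Y > 0$, which is already built into the statement of the lemma.
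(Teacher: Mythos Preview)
Your sketch is correct and follows the standard argument for the parabolic Schwarz lemma. Note, however, that the paper does not supply its own proof of this lemma: it is stated with attribution to Song--Tian \cite{song-tian07} (and Yau \cite{yau-schwarz} for the elliptic version) and then used as a black box. So there is no ``paper's proof'' to compare against; what you have written is precisely the computation one finds in those references.

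A couple of small points of bookkeeping that you may want to tighten when writing this out in full. First, the term you call $g^{i\bar j}g^{k\bar l}\nabla_i\nabla_{\bar l}h_{k\bar j}$ is cleaner to record as the manifestly nonnegative quantity $|\nabla F|^2_{\omega,\omega_Y} = g^{i\bar j}g^{k\bar l}(g_Y)_{\alpha\bar\beta}\,\nabla_i F^\alpha_k\,\overline{\nabla_j F^\beta_l}$; this makes the sign transparent and is exactly the object that absorbs $|\nabla u|^2/u$ via Cauchy--Schwarz, since $\nabla_i u = g^{k\bar l}(g_Y)_{\alpha\bar\beta}(\nabla_i F^\alpha_k)\overline{F^\beta_l}$ and hence $|\nabla u|^2 \le u\,|\nabla F|^2$. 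Second, the bound on the target-curvature term by $Cu^2$ is most easily justified by choosing coordinates at the point so that $g_{i\bar j}=\delta_{ij}$ and $h_{i\bar j}$ is diagonal; the bisectional curvature upper bound $C$ then gives the inequality termwise. With those two clarifications your outline is complete.
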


We now state a well-known inequality following from a trace estimate along the flow due to Cao \cite{cao85} which is the parabolic version of an estimate of the complex Monge-Amp\`ere equation due to Aubin and Yau \cite{aubin78,yau78} (see also Proposition 2.5 of \cite{song-weinkove13}):
\begin{lemma}
Let $\omega=\omega(t)$ be a solution to the K\"ahler-Ricci flow and $\tilde{\omega}$ a fixed metric on $X$, then
\begin{align*}
(\partial_t - \Delta)\log\tr_{\tilde{\omega}}\omega \le C\tr_\omega \tilde{\omega},
\end{align*}
where $C$ depends only on a lower bound of the bisectional curvature of $\tilde{\omega}$.\label{ineq}
\end{lemma}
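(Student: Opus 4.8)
The plan is to prove this by the standard parabolic maximum-principle computation of Cao \cite{cao85}, the flow analogue of the second-order estimate of Aubin and Yau; the only inputs are the K\"ahler--Ricci flow equation $\partial_t g_{i\bar j}=-R_{i\bar j}$ and the fact that $\tilde\omega$ is time-independent. Fix a point $x_0\in X$ and a time $t_0\in[0,T)$, and choose holomorphic coordinates centered at $x_0$ that are normal for $\tilde g$ (so $\tilde g_{i\bar j}(x_0)=\delta_{ij}$ and $d\tilde g_{i\bar j}(x_0)=0$) and that simultaneously diagonalize $g(t_0)$ at $x_0$, writing $g_{i\bar j}(x_0)=\lambda_i\delta_{ij}$ with $\lambda_i>0$. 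Then at $x_0$ we have $\tr_{\tilde\omega}\omega=\sum_i\lambda_i$ and $\tr_\omega\tilde\omega=\sum_i\lambda_i^{-1}$, and every derivative computation below is understood to take place at this point, using the curvature convention $R_{i\bar j k\bar l}=-\partial_k\partial_{\bar l}g_{i\bar j}+g^{p\bar q}(\partial_k g_{i\bar q})(\partial_{\bar l}g_{p\bar j})$.

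First I would compute the evolution of $u:=\tr_{\tilde\omega}\omega=\tilde g^{i\bar j}g_{i\bar j}$. Since $\tilde g$ does not depend on $t$, $\partial_t u=\tilde g^{i\bar j}\partial_t g_{i\bar j}=-\tilde g^{i\bar j}R_{i\bar j}$. For $\Delta u=g^{k\bar l}\partial_k\partial_{\bar l}(\tilde g^{i\bar j}g_{i\bar j})$, the cross terms involving $\partial\tilde g$ vanish at $x_0$, leaving $g^{k\bar l}(\partial_k\partial_{\bar l}\tilde g^{i\bar j})g_{i\bar j}+\tilde g^{i\bar j}g^{k\bar l}\partial_k\partial_{\bar l}g_{i\bar j}$. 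Differentiating $\tilde g^{i\bar j}\tilde g_{i\bar m}=\delta^{\bar j}_{\bar m}$ twice and using normality gives $\partial_k\partial_{\bar l}\tilde g^{i\bar j}(x_0)=\tilde R_{i\bar j k\bar l}$ (curvature of $\tilde g$), so the first term equals $\sum_{i,k}\lambda_i\lambda_k^{-1}\tilde R_{i\bar i k\bar k}$; the second, using $\partial_k\partial_{\bar l}g_{i\bar j}=-R_{i\bar j k\bar l}+g^{p\bar q}(\partial_k g_{i\bar q})(\partial_{\bar l}g_{p\bar j})$, equals $-\tilde g^{i\bar j}R_{i\bar j}+\tilde g^{i\bar j}g^{k\bar l}g^{p\bar q}(\partial_k g_{i\bar q})(\partial_{\bar l}g_{p\bar j})$. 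Subtracting, the $\tilde g^{i\bar j}R_{i\bar j}$ terms cancel and at $x_0$ we get $(\partial_t-\Delta)u=-\sum_{i,k}\lambda_i\lambda_k^{-1}\tilde R_{i\bar i k\bar k}-\tilde g^{i\bar j}g^{k\bar l}g^{p\bar q}(\partial_k g_{i\bar q})(\partial_{\bar l}g_{p\bar j})$. If the bisectional curvature of $\tilde\omega$ is bounded below by $-C$, then $-\tilde R_{i\bar i k\bar k}\le C$ in the $\tilde g$-orthonormal frame, so the curvature sum is at most $C\big(\sum_i\lambda_i\big)\big(\sum_k\lambda_k^{-1}\big)=C\,u\,\tr_\omega\tilde\omega$, while the last term is nonpositive.

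Finally I would pass to $\log u$ via $(\partial_t-\Delta)\log u=u^{-1}(\partial_t-\Delta)u+u^{-2}|\nabla u|_\omega^2$ with $|\nabla u|_\omega^2=g^{k\bar l}\partial_k u\,\partial_{\bar l}u$, and absorb the first-order terms using the Cauchy--Schwarz inequality $|\nabla u|_\omega^2\le u\cdot\tilde g^{i\bar j}g^{k\bar l}g^{p\bar q}(\partial_k g_{i\bar q})(\partial_{\bar l}g_{p\bar j})$. This holds because $\partial_k u=\sum_i\partial_k g_{i\bar i}$ at $x_0$, so $|\partial_k u|^2=\big|\sum_i\lambda_i^{1/2}\cdot\lambda_i^{-1/2}\partial_k g_{i\bar i}\big|^2\le\big(\sum_i\lambda_i\big)\big(\sum_i\lambda_i^{-1}|\partial_k g_{i\bar i}|^2\big)$; multiplying by $\lambda_k^{-1}$, summing over $k$, and then noting that $\sum_{i,k}\lambda_i^{-1}\lambda_k^{-1}|\partial_k g_{i\bar i}|^2$ is only part (the $p=i$ part) of $\tilde g^{i\bar j}g^{k\bar l}g^{p\bar q}(\partial_k g_{i\bar q})(\partial_{\bar l}g_{p\bar j})$ gives the claim. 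Putting the pieces together, the two gradient contributions cancel with a favorable sign and $(\partial_t-\Delta)\log\tr_{\tilde\omega}\omega\le C\tr_\omega\tilde\omega$ at $(x_0,t_0)$, which was arbitrary. The only real obstacle is bookkeeping: fixing the curvature normalization and the sign in $\partial_k\partial_{\bar l}\tilde g^{i\bar j}$, and choosing the precise Cauchy--Schwarz grouping that forces the first-order terms to cancel; everything else is a direct computation in the adapted frame.
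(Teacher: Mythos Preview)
Your proof is correct and is precisely the standard Cao computation that the paper invokes; the paper does not write out its own proof of this lemma but only cites \cite{cao85}, \cite{aubin78,yau78}, and Proposition~2.5 of \cite{song-weinkove13}, and your argument is exactly the one found in those references. The choice of $\tilde g$-normal coordinates diagonalizing $g$, the cancellation of the Ricci terms, the curvature bound $-\tilde R_{i\bar i k\bar k}\le C$, and the Cauchy--Schwarz absorption of $|\nabla u|_\omega^2$ are all carried out correctly.
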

From these inequalities, we have the following well-known estimate due to Tian-Zhang \cite{tian-zhang06} and Zhang \cite{zhang06} (see also \cite{sw11,tian08} and Lemma 2.1 and 2.2 of \cite{song-weinkove13}):
\begin{lemma}
There exists a uniform constant $C$ depending only on $(X,\omega_0)$ such that the solution $\omega=\omega(t)$ to the K\"ahler-Ricci flow satisfies $\omega^n \le C\omega_0^n$.\label{volume_bound}
\end{lemma}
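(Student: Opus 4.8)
The statement to be proved is Lemma \ref{volume_bound}: a uniform bound $\omega^n\le C\omega_0^n$ along the Kähler-Ricci flow on $[0,T)$ with $T<\infty$. The plan is to reduce this volume-form bound to a parabolic maximum-principle argument applied to a suitable potential-type quantity. First I would fix a reference form $\hat\omega_t = \omega_0 - t\,\mathrm{Ric}(\omega_0)$ (or, more conveniently, a smooth path of Kähler forms representing the classes $[\omega_0]-tc_1(X)$, which exist and stay Kähler on $[0,T]$ since $T<\infty$ is strictly less than the supremum's closure only up to the boundary; one uses $[\omega_0]-Tc_1(X)\ge 0$ and perturbs, or simply works on $[0,T']$ for $T'<T$ with constants independent of $T'$). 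Then the flow reduces to a scalar parabolic complex Monge-Ampère equation
\begin{align*}
\frac{\partial\varphi}{\partial t} = \log\frac{(\hat\omega_t + \sqrt{-1}\partial\bar\partial\varphi)^n}{\Omega},\qquad \varphi(0)=0,
\end{align*}
where $\Omega$ is a fixed smooth volume form with $\sqrt{-1}\partial\bar\partial\log\Omega$ chosen so that $\partial_t\hat\omega_t = -\sqrt{-1}\partial\bar\partial\log\Omega + (\text{exact adjustments})$; writing $\omega(t)=\hat\omega_t+\sqrt{-1}\partial\bar\partial\varphi$ one checks this is equivalent to the Kähler-Ricci flow. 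The quantity $\omega^n/\omega_0^n = e^{\dot\varphi}\,\Omega/\omega_0^n$, so it suffices to bound $\dot\varphi$ from above uniformly.

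The key step is to apply the maximum principle to an auxiliary quantity of the form $Q = \dot\varphi - A\varphi$ for a large constant $A$ depending only on the geometry of the path $\hat\omega_t$ (in particular on a lower bound for the bisectional curvature of $\omega_0$ and on $\sup_{[0,T]}\mathrm{tr}_{\hat\omega_t}\omega_0$-type quantities, all finite since $T<\infty$). Differentiating the Monge-Ampère equation in $t$ gives $(\partial_t-\Delta)\dot\varphi = \mathrm{tr}_\omega\partial_t\hat\omega_t$, and $(\partial_t-\Delta)\varphi = \dot\varphi - \mathrm{tr}_\omega(\omega - \hat\omega_t) = \dot\varphi - n + \mathrm{tr}_\omega\hat\omega_t$. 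Combining, $(\partial_t-\Delta)Q = \mathrm{tr}_\omega\partial_t\hat\omega_t - A\dot\varphi + An - A\,\mathrm{tr}_\omega\hat\omega_t$. Choosing $A$ large enough that $A\hat\omega_t + \partial_t\hat\omega_t \ge \varepsilon\,\hat\omega_t > 0$ as forms (possible uniformly on $[0,T]$), the terms involving $\mathrm{tr}_\omega$ become $\le -\varepsilon\,\mathrm{tr}_\omega\hat\omega_t$, so $(\partial_t-\Delta)Q \le -A\dot\varphi + An - \varepsilon\,\mathrm{tr}_\omega\hat\omega_t$. At an interior spatial maximum of $Q$ in a time slice where $Q$ is near its supremum, one uses the elementary inequality $\mathrm{tr}_\omega\hat\omega_t \ge n(\hat\omega_t^n/\omega^n)^{1/n} = n\,e^{-\dot\varphi/n}(\hat\omega_t^n/\Omega)^{1/n} \ge c\,e^{-\dot\varphi/n}$ to conclude that at such a point $\dot\varphi$ cannot be too large: if $\dot\varphi$ were large then $-A\dot\varphi$ dominates and $Q$ is decreasing, contradiction. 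Hence $Q\le C$, and since $\varphi$ itself is bounded above on $[0,T)$ (this uses Lemma \ref{ineq} or the standard $C^0$-estimate argument, or one incorporates a bound on $\varphi$ directly via the same maximum principle with a further modification), we get $\dot\varphi\le C$, i.e. $\omega^n\le C\,\omega_0^n$.

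Alternatively — and this is perhaps the cleanest route given what is available in the excerpt — one can bypass the potential entirely and invoke Lemma \ref{ineq}: setting $u = \log\mathrm{tr}_{\omega_0}\omega$, Lemma \ref{ineq} gives $(\partial_t-\Delta)u\le C\,\mathrm{tr}_\omega\omega_0$, and combining with $(\partial_t-\Delta)(-\log(\omega^n/\omega_0^n)) = (\partial_t-\Delta)(-\dot\varphi) = -\mathrm{tr}_\omega\partial_t\hat\omega_t$ together with $-\mathrm{tr}_\omega\partial_t\hat\omega_t \le C'\mathrm{tr}_\omega\omega_0$, one forms $u - (1+C/C')\log(\omega^n/\omega_0^n)$ or a similar combination whose $(\partial_t-\Delta)$ is bounded above by a negative multiple of $\mathrm{tr}_\omega\omega_0$ plus a constant; the maximum principle together with the arithmetic-geometric inequality $\mathrm{tr}_{\omega_0}\omega \cdot \mathrm{tr}_\omega\omega_0 \ge (\omega^n/\omega_0^n)^{?}$-type bounds then forces $\omega^n/\omega_0^n \le C$.

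The main obstacle is bookkeeping the dependence of constants: one must ensure every constant ($A$, $\varepsilon$, $C$, and the curvature lower bound feeding Lemma \ref{ineq}) depends only on $(X,\omega_0)$ and on $T$, which is legitimate since $T<\infty$ and the reference path $\hat\omega_t$ can be taken smooth and nondegenerate on the closed interval $[0,T]$; care is needed because $\omega(t)$ degenerates as $t\to T$, so one must check the maximum of the auxiliary quantity is not pushed to $t=T$ — this is handled by noting the auxiliary quantity is smooth on $[0,T')$ for each $T'<T$ and all estimates are uniform in $T'$, then letting $T'\to T$. I would present the potential-theoretic version as the main argument since it is self-contained modulo the standard reduction to the scalar equation, and it is the argument originally due to Cao, Tian-Zhang and Zhang that the excerpt attributes the lemma to.
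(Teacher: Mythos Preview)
The paper does not actually supply a proof of this lemma; it is simply stated with references to Tian--Zhang and Zhang. So there is no ``paper's proof'' to compare against, and your proposal must be judged on its own.

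Your overall strategy --- reduce to the scalar parabolic Monge--Amp\`ere equation and bound $\dot\varphi$ from above by a maximum-principle argument --- is exactly the right one, and is indeed the argument in the references cited. However, there is a genuine gap in your execution. You propose the auxiliary quantity $Q=\dot\varphi - A\varphi$ and want to choose a fixed $A$ so that $A\hat\omega_t+\partial_t\hat\omega_t\ge\varepsilon\hat\omega_t$ uniformly on $[0,T]$. You then assert that $\hat\omega_t$ ``can be taken smooth and nondegenerate on the closed interval $[0,T]$.'' In the setting of this paper that is false: the limiting class $[\omega_0]-Tc_1(X)$ lies on the boundary of the K\"ahler cone (it is $[\pi^*\omega_Y]$ or $[\rho^*\omega_B]$, which is only semipositive), so no representative $\hat\omega_T$ can be a K\"ahler form. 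Concretely, with $\hat\omega_t=\tfrac{1}{T}\big((T-t)\omega_0+t\hat\omega_T\big)$ and $\chi=\partial_t\hat\omega_t=\tfrac1T(\hat\omega_T-\omega_0)$, the inequality $(A-\varepsilon)\hat\omega_t\ge-\chi$ forces $(A-\varepsilon)(T-t)\ge 1$ in the directions where $\hat\omega_T$ vanishes, which fails as $t\to T$ for any fixed $A$. Your proposed workaround (restrict to $[0,T']$ and claim uniformity in $T'$) therefore does not go through as written, since $A$ depends on $T'$.

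The standard fix, which is what Tian--Zhang actually do, is to use a \emph{time-dependent} coefficient: set $Q=t\dot\varphi-\varphi-nt$. A direct computation gives
\[
(\partial_t-\Delta)Q \;=\; \dot\varphi + t\,\tr_\omega\chi - \dot\varphi + n - \tr_\omega\hat\omega_t - n \;=\; \tr_\omega(t\chi-\hat\omega_t)\;=\;-\tr_\omega\omega_0 \;<\;0,
\]
so by the maximum principle $Q\le 0$, i.e.\ $t\dot\varphi\le\varphi+nt$. Combined with the elementary upper bound $\varphi\le C$ (which you correctly note is easy), this gives $\dot\varphi\le C$ for $t$ bounded away from $0$; the bound near $t=0$ is automatic from smoothness of the initial data. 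This yields $\omega^n\le C\omega_0^n$ with $C$ depending only on $(X,\omega_0)$, as required. Your second ``alternative'' approach via Lemma~\ref{ineq} is too vague to assess and does not obviously close either; the $t\dot\varphi-\varphi$ trick is the clean route.
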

The following is another result we will need due to Tian-Zhang \cite{tian-zhang06} and Zhang \cite{zhang06} (cf. \cite{tsuji88}):
\begin{lemma}\label{metric_lower_bound}Given K\"ahler manifolds $(X,\omega_0)$ and $(Y,\omega_Y)$, if there exists a surjective holomorphic map $\zeta:X\rightarrow Y$ then for $\omega = \omega(t)$ a solution of the K\"ahler-Ricci flow on $X$ satisfying $\lim_{t\rightarrow T}[\omega(t)]=[\zeta^*\omega_Y]$ for $T<\infty$, there exists a uniform $c>0$ such that
\begin{align*}
\omega\ge c\zeta^*\omega_Y.
\end{align*}
\end{lemma}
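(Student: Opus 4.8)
The plan is to reduce the statement to an application of the parabolic Schwarz lemma (Lemma \ref{psl}) together with the volume upper bound (Lemma \ref{volume_bound}), following the strategy of Tian--Zhang and Zhang. First I would set $\eta = \zeta^*\omega_Y$, a smooth closed nonnegative $(1,1)$-form on $X$ which is positive in the directions transverse to the fibers of $\zeta$; since $\omega_Y$ has bounded bisectional curvature and $\zeta$ is holomorphic, Lemma \ref{psl} gives
\begin{align*}
(\partial_t - \Delta)\log \tr_\omega \eta \le C_0\, \tr_\omega \eta
\end{align*}
at all points and times where $\tr_\omega\eta > 0$. The cohomological hypothesis $\lim_{t\to T}[\omega(t)] = [\zeta^*\omega_Y]$, combined with $T<\infty$, lets us write $[\omega(t)] = [\zeta^*\omega_Y] + (T-t)\alpha$ for a fixed class $\alpha = -c_1(X) + \cdots$; in particular there is a fixed smooth reference form in $[\zeta^*\omega_Y]$ and we can use Cao's parabolic potential to express $\log(\omega^n/\Omega)$ for a suitable volume form $\Omega$, which controls the zeroth-order term when we run the maximum principle.

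The key step is to apply the maximum principle to the quantity
\begin{align*}
Q = \log \tr_\omega \eta - A\varphi,
\end{align*}
where $\varphi$ is the (uniformly bounded, by the standard $C^0$-estimate along the flow) relative K\"ahler potential satisfying $\partial_t \varphi$-type evolution and $\sqrt{-1}\partial\bar\partial\varphi = \omega - \hat\omega_t$ for the reference metrics $\hat\omega_t$, and $A$ is a large constant chosen so that $-A\sqrt{-1}\partial\bar\partial\varphi$ absorbs the bad term $C_0\tr_\omega\eta$; concretely one uses $\Delta\varphi = \tr_\omega(\omega - \hat\omega_t) = n - \tr_\omega\hat\omega_t$ and the fact that $\hat\omega_t \ge c_1\eta$ for a fixed $c_1>0$ and all $t$ near $T$. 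This yields $(\partial_t - \Delta)Q \le -\tr_\omega\eta + (\text{bounded})$ at an interior maximum, forcing $\tr_\omega\eta \le C$ there, hence $\tr_\omega\eta \le C$ everywhere on $X\times[0,T)$ after accounting for the bounded terms and the initial time. Once $\tr_\omega\eta$ is bounded above, I would combine this with the volume lower bound: Lemma \ref{volume_bound} gives $\omega^n \le C\omega_0^n$, and on the other hand along the flow $\omega^n$ is bounded \emph{below} by $c\,\Omega$ away from a neighborhood of the singular fibers — more robustly, one argues pointwise that at any point, choosing coordinates diagonalizing $\omega$ and $\eta$, the bound $\tr_\omega\eta \le C$ together with $\det(\omega)\ge$ (controlled) and $\eta$ having bounded coefficients forces each eigenvalue of $\eta$ with respect to $\omega$ to be bounded, i.e.\ $\omega \ge c\,\eta = c\,\zeta^*\omega_Y$.

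The main obstacle is the last implication: a bound on $\tr_\omega\eta$ alone does \emph{not} give $\omega \ge c\eta$ without a lower bound on the volume form $\omega^n$ (equivalently, an upper bound on $\tr_\eta\omega$ is what one really wants, but $\eta$ is degenerate so this is not literally available). The correct route is to control $\log(\tr_\omega\eta) - \log(\omega^n/\Omega)$ or to run a second maximum-principle argument bounding $\tr_{\eta'}\omega$ on the base directions, using that $\zeta$ is a submersion generically and patching; alternatively, since in all our applications $\zeta$ is either a blow-down or a fiber bundle projection, the cleanest argument uses the evolution of $\log\tr_\omega\eta + \log\tr_\eta\omega$ restricted to where it makes sense together with the already-established $\tr_\omega\eta \le C$ and $\omega^n \ge c\,\Omega$ (the latter from the lower bound on $\partial_t\varphi$ via the flow equation). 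I would isolate this as the technical heart and handle the degeneracy of $\eta$ by working with $\eta + \epsilon\omega_0$ and letting $\epsilon \to 0$ at the end, which keeps all reference forms genuinely positive during the maximum-principle computations.
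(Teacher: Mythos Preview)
The paper does not supply its own proof of this lemma; it is stated as a known result due to Tian--Zhang and Zhang and simply cited. Your overall strategy --- parabolic Schwarz lemma plus a maximum-principle argument on $Q=\log\tr_\omega\eta - A\varphi$ --- is exactly the standard proof in those references, and your first two steps are correct.

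However, your ``main obstacle'' is a phantom. You claim that $\tr_\omega\eta\le C$ does not by itself yield $\omega\ge c\eta$, and you then propose extra machinery (volume lower bounds, a second maximum-principle argument, an $\epsilon$-regularization). In fact the implication is immediate and elementary: at any point choose coordinates in which $g_{i\bar j}=\delta_{ij}$ and $\eta_{i\bar j}$ is diagonal with nonnegative eigenvalues $\lambda_1,\dots,\lambda_n$ (possible since $\eta=\zeta^*\omega_Y\ge 0$). Then $\tr_\omega\eta=\sum_i\lambda_i\le C$ forces each $\lambda_i\le C$, hence $\eta\le C\omega$, i.e.\ $\omega\ge C^{-1}\eta$. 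No volume lower bound is needed, and the degeneracy of $\eta$ is harmless precisely because $\eta\ge 0$. You have the direction of the trace implication reversed: it is an upper bound on $\tr_\omega\eta$ (not $\tr_\eta\omega$) that gives the desired lower bound $\omega\ge c\eta$. Once you drop the spurious last paragraph, your proof is complete and matches the literature.
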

In particular, the above lemma holds when $X$ is a fiber bundle over $Y$ (see Lemma 2.1 of \cite{ssw13}). 
\section{Contracting exceptional divisors}
Let $X$ be a compact K\"ahler manifold of dimension $n\ge 2$ with disjoint, irreducible exceptional divisors $E_1,\ldots, E_k$ arising from blowing up distinct points $p_1,\ldots,p_k$ and corresponding blow-up map $\pi: X\rightarrow Y$.
It has been shown in the work of \cite{song-weinkove13} that the K\"ahler-Ricci flow contracts exceptional divisors when the initial K\"ahler class satisfies
\begin{align*}
[\omega_0] - Tc_1(X) = [\pi^*\omega_Y].
\end{align*}
Song and Weinkove \cite{song-weinkove13} show in their paper that the diameter of $X$ is bounded along the flow and that the evolving metric converges in the Gromov-Hausdorff sense to a metric on $Y$ and smoothly away from the exceptional divisors. 
In this section, we provide a different point of view for computing one of the critical estimates in their paper,  Lemma 2.5(i) from \cite{song-weinkove13}, by replacing estimates on the traces of the evolving metric by estimates of the metric along certain directions given by holomorphic vector fields.

Since the divisors are disjoint, we may simply focus on the local behavior around a given exceptional divisor $E$ as we have uniform estimates on $\omega(t)$ on compact subsets away from the exceptional divisors. We will drop the variable $t$ from here on and denote the evolving metric by $\omega$. A proof of these uniform estimates was shown in \cite{tian-zhang06} using the fact that $\pi^*\omega_Y$ is uniformly equivalent to $\omega_0$ on $X\backslash \cup_{i=0}^m E_i$, coupled with the fact that $\omega^n\le C\omega_0^n$ and $\omega(t)\ge c\pi^*\omega_Y$ by Lemmas \ref{volume_bound} and \ref{metric_lower_bound}. By assumption, we have that $\pi(E)=p$ and $\pi^{-1}(p)\cong \mathbb{P}^{n-1}$. 

On the unit ball $D\subset Y$ around $p$, define coordinates $z^1,\ldots, z^n$ which we can pull-back via $\pi$ to $X$. We can identify $\pi^{-1}(D)$ with the submanifold 
\begin{align*}
\tilde{D}=\{(z,l)\in D\times \mathbb{P}^{n-1}| z^pl^q = z^ql^p\}.
\end{align*}
Now, let us define for each pair of values $i,j\in \{1,\ldots, n\}$, a holomorphic vector field $$V^k_\ell = z^\ell \frac{\partial}{\partial z^k},$$ where we note that $k,\ell$ are not summation variables, which then defines via $\pi$ a holomorphic vector field on $\pi^{-1}(D)$ vanishing to order 1 along $E$. We may extend each $V^k_\ell$ to a smooth (not holomorphic) global vector field on the whole of $X$. In the following lemma, we are identifying $\pi^{-1}(D\backslash \{0\})$ with $D\backslash \{0\}$ via the map $\pi$ and writing $\omega$ for the K\"ahler metric $(\pi^{-1})^*\omega$ on $D\backslash \{0\}\subset Y$. Since we have bounds on compact subsets away from $E$, it suffices to prove bounds on $\pi^{-1}(D)$.

In order to arrive at a uniform diameter bound of $X$ with respect to the evolving metric $\omega(t)$, we will need to uniformly bound the following: (1)  the diameter of spheres centered at the blow-up point and (2) the length of radial paths emanating from the blow-up point. We begin by proving the first of these two bounds:

\begin{lemma}
For $r\in (0,1)$, the diameter of the $2n-1$ sphere $S_r$ of radius $r$ in $D$ centered at the blow-up point with respect to the metric induced from $\omega$ is uniformly bounded from above, independent of $t$ and $r$.
\label{sphere_diameter_bound}
\end{lemma}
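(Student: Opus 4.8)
We want to bound the diameter of the sphere $S_r$ in the evolving metric $\omega$. The idea is to connect two points on $S_r$ by a path lying on $S_r$ and bound its length using the holomorphic vector fields $V^k_\ell = z^\ell \partial/\partial z^k$. On the sphere $S_r = \{|z| = r\}$, the tangent directions are spanned by vectors of the form $z^\ell \partial/\partial z^k - z^k \partial/\partial z^\ell$ (and their conjugates, and the "angular" rotations $\sqrt{-1}(z^\ell\partial/\partial z^k + \overline{\cdots})$ type combinations) — these are precisely the directions annihilating $|z|^2$. Crucially, each such direction is a difference $V^k_\ell - V^\ell_k$ of two of our holomorphic vector fields, so its $\omega$-norm is controlled by $|V^k_\ell|_\omega + |V^\ell_k|_\omega$.

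**Key steps.** First I would invoke Equation \eqref{holoc_vf_bound}: for each holomorphic vector field $V^k_\ell$ on $\pi^{-1}(D)$, the function $\log |V^k_\ell|^2_{\omega(t)}$ is a subsolution of the heat equation on $\pi^{-1}(D)\times[0,T)$. By the maximum principle, its supremum over $\pi^{-1}(D)$ is attained either at $t=0$ or on the spatial boundary $\pi^{-1}(\partial D)$. At $t=0$ we have the fixed smooth metric $\omega_0$, so $|V^k_\ell|^2_{\omega_0}$ is bounded. On $\pi^{-1}(\partial D)$ — which is a compact subset bounded away from the exceptional divisor $E$ — the uniform estimates on $\omega(t)$ discussed in the text (following from Lemmas \ref{volume_bound} and \ref{metric_lower_bound}) give $|V^k_\ell|^2_{\omega(t)} \le C$. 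Hence $|V^k_\ell|^2_{\omega} \le C$ uniformly on all of $\pi^{-1}(D)$, for every pair $k,\ell$. Second, I would observe that along any smooth curve $\gamma(s)$ contained in $S_r$, the velocity $\dot\gamma$ (viewed as a real tangent vector, i.e. a combination of $\partial/\partial z$ and $\partial/\partial\bar z$ annihilating $\sum|z^j|^2$) can be written as a combination, with coefficients bounded by $O(1)$ in terms of the ambient Euclidean data, of the vector fields $V^k_\ell - V^\ell_k$ and their conjugates. Therefore $|\dot\gamma|_\omega \le C\sum_{k,\ell}(|V^k_\ell|_\omega + |V^\ell_k|_\omega) \le C'$. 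Third, since $S_r$ (for $r \in (0,1)$) has Euclidean diameter $\le 2$ and can be connected by curves of bounded Euclidean length — uniformly in $r$, after rescaling $z \mapsto z/r$, the unit sphere is a fixed compact set — we conclude $\operatorname{diam}_\omega S_r \le C''$ independent of $t$ and $r$.

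**The main obstacle.** The delicate point is the uniformity in $r$ as $r \to 0$: a priori the coefficients expressing $\dot\gamma$ in terms of the $V^k_\ell$ could blow up near the origin. The resolution is that $S_r$ lies in $\pi^{-1}(D\setminus\{0\})$, where $\pi$ is a biholomorphism, and the vector fields $V^k_\ell$ scale correctly — under $z \mapsto \lambda z$ they are homogeneous of degree $1$, matching the scaling of tangent vectors to spheres — so the bound on $|\dot\gamma|_\omega$ does not degenerate. One must also check that the path connecting two given points of $S_r$ can be taken of length $O(1)$ in the relevant sense; parametrizing $S_r$ by great circles and noting that rotations in the $z^k z^\ell$-planes are generated exactly by the real and imaginary parts of $V^k_\ell - V^\ell_k$ makes this transparent. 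I would finally need to be slightly careful that the maximum principle argument for \eqref{holoc_vf_bound} applies on the open set $\pi^{-1}(D)$ with boundary; since $|V^k_\ell|^2_\omega$ extends continuously to the compact closure and the comparison holds on the parabolic boundary, this is standard.
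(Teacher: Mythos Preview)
Your proposal is correct and follows essentially the same strategy as the paper: apply the maximum principle to $\log|V^k_\ell|^2_\omega$ on $\pi^{-1}(D)\times[0,T)$, using the uniform estimates on the spatial boundary and the bound at $t=0$, to conclude $|V^k_\ell|^2_\omega\le C$ uniformly for all pairs $(k,\ell)$; then use this to control lengths on $S_r$.

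The one difference is in how the second step is packaged. You express tangent vectors to $S_r$ directly as $O(1)$-combinations of the $V^k_\ell$ (and conjugates) and bound $|\dot\gamma|_\omega$ path-by-path. The paper instead extracts from $|V^k_\ell|^2_\omega=g_{k\bar k}|z^\ell|^2\le C$ the pointwise inequality $g_{k\bar k}\le C/|z^\ell|^2$ for every $\ell$, hence $g_{k\bar k}\le C/\max_\ell|z^\ell|^2\le C/r^2$, and therefore the full metric comparison $\omega\le (C/r^2)\,\omega_{\Eucl}$ on $\pi^{-1}(D)$; the sphere bound then follows from $d_{\iota_r^*\omega}\le (\sqrt{C}/r)\,d_{\iota_r^*\omega_{\Eucl}}\le \sqrt{C}\,\pi$. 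This is a bit cleaner than your version, since it avoids having to argue that \emph{every} tangent direction to $S_r$ (including the Hopf circle direction, which is not of the form $V^k_\ell-V^\ell_k$) is a bounded combination of the vector fields --- and it yields a stronger intermediate estimate that is reused later. Your ``bounded coefficients'' claim is true (at any $z\in S_r$ pick $\ell$ with $|z^\ell|\ge r/\sqrt{n}$ and write $\partial_{z^k}=(z^\ell)^{-1}V^k_\ell$), but once you write it out you have effectively reproved $g_{k\bar k}\le C/r^2$ anyway. One small point you gloss over: before invoking the maximum principle you should check, as the paper does via the model metric $\pi^*\omega_Y+\tau^*\omega_{FS}$, that $|V^k_\ell|^2_{\omega(t)}$ is actually bounded on $\pi^{-1}(D)$ for each fixed $t$ (so the supremum is attained), since $g_{k\bar k}$ itself blows up along $E$.
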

\begin{proof}
We begin by showing that on $\pi^{-1}(D)$, there exists a constant $C$ such that 
\begin{align*}
g_{\mathbb{R}}\big(\frac{\partial}{\partial x^1},\frac{\partial}{\partial x^1}\big) = 2g_{1\bar{1}}\le \frac{C}{r^2}
\end{align*}
for the Euclidean metric $g_{\mathbb{R}}$ on $D$ and $r^2 =\sum_{k=1}^{n}|z^k|^2$.
We proceed by an application of the maximum principle and by considering the holomorphic vector fields $V^1_\ell$ for $\ell=1,\ldots,n$. 
Note that for each fixed $t$, we have that $g_{i\bar{j}}$ is uniformly equivalent (with a constant that depends on $t$) to the model metric 
\begin{align}\begin{split}
\pi^*\omega_Y + \tau^*\omega_{FS} = \delta_{ij}+\frac{\sqrt{-1}}{2\pi r^2}\big(\delta_{ij}-\frac{\overline{z^i} z^j}{r^2}\big)\label{model_metric}
\end{split}\end{align}
where $\delta_{ij}$ is the pullback of the Euclidean metric on $Y$ by $\pi$ and the second term is the pullback of the Fubini-Study metric  on $\mathbb{P}^{n-1}$ by $\tau: \tilde{D}\rightarrow \mathbb{P}^{n-1}$ taking $(z,l)\mapsto l$ where $\frac{l^p}{l^q}=\frac{z^p}{z^q}$. 

For each fixed $t$, we have that
\begin{align*}
|V^1_\ell|^2_{\omega} = g_{1\bar{1}}(t)|z^\ell|^2\le C(t) \Big(1+\frac{\sqrt{-1}}{2\pi r^2}\big(1-\frac{|z^1|^2}{r^2}\big)\Big)|z^\ell|^2\le C(t) \ \text{on} \  \pi^{-1}(D),
\end{align*}
and is a smoothly defined quantity. It follows from Equation \eqref{holoc_vf_bound}, that on $\pi^{-1}(D)$,
\begin{align*}
(\partial_t-\Delta)\log|V^1_\ell|^2_\omega &\le 0
\end{align*}
for all $\ell=1,\ldots,n$. This implies that a maximum occurs either on the boundary of $\pi^{-1}(D)$ or at $t=0$. 
Since we have uniform estimates on compact subsets away from the exceptional divisor, we have that $|V^1_\ell|^2_\omega\le C$ on the boundary of $\pi^{-1}(D)$. If the maximum occurs at $t=0$ then 
\begin{align*}
|V^1_\ell|^2_\omega \le &\sup_{\pi^{-1}(D)} |V^1_\ell|^2_{\omega_0}\le C .
\end{align*}
By the definition of $|V_1^\ell|^2_\omega$, this gives us that
\begin{align*}
\ g_{1\bar{1}}&\le \frac{C}{|z^\ell|^2} \ \  \text{for } \ell=1,\ldots,n \ \ 
\Rightarrow \ \ g_{1\bar{1}}\le \frac{C}{\max_\ell|z^\ell|^2} \ \
\Rightarrow \ \ g_{1\bar{1}}\le \frac{C}{r^2}
\end{align*}
An identical argument can be used to obtain that $g_{k\bar{k}}\le \frac{C}{r^2}$ for $k\in \{2,\ldots, n\}$. Together, this gives us that on $\pi^{-1}(D)$,
\begin{align}\begin{split}
\omega\le \frac{C}{r^2}\omega_{\Eucl} .\label{lemma1_bd}
\end{split}\end{align}

From this, we are left to show bounds on the diameter of the $2n-1$ sphere $S_r$ for $r\in(0,1)$ centered at the blow-up point which we may identify with its preimage under $\pi$, independent of $r$. Consider the inclusion map $\iota_r: S_r\rightarrow D$. Using Equation \eqref{lemma1_bd}, we have for any $r\in (0,1)$ and $p, q\in S_r$ that
\begin{align*}
d_{\iota_r^* \omega(t)}(p,q)\le \frac{\sqrt{C}}{r}d_{\iota_r^* \omega_{\Eucl}}(p,q)\le \sqrt{C}\pi
\end{align*}
which is independent of $t$ and $r$. 
\end{proof}

To complete the proof of the diameter bound of $X$ with respect to $\omega(t)$, we will need to combine this result with a bound on the length of radial paths in $D\backslash \{0\}$. Define the radial vector field $V^{\rad}= \sum_{\ell=1}^n z^\ell\frac{\partial}{\partial z^\ell}$ on the unit ball centered at the blow-up point in $Y$, which we may identify via $\pi$ to be a holomorphic vector field on $X$ in a neighborhood of $D$.\\

\begin{lemma}
For any $x\in\pi^{-1}(D\backslash\{0\})$, the length of the radial path $\gamma(\lambda)=\lambda x$ for $\lambda\in (0,1]$ with respect to $\omega$ is uniformly bounded from above by a uniform constant multiple of $|x|^{1/2}$. \label{radial_path_bound}
\end{lemma}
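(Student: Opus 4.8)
The plan is to reduce the $\omega$-length of the radial path to an integral of $|V^{\rad}|_{\omega}$ and then run a maximum-principle argument of the same flavour as in Lemma~\ref{sphere_diameter_bound}, with the radial vector field $V^{\rad}$ now playing the role that the $V^{1}_{\ell}$ played there. First I would compute the velocity of $\gamma$: since $z^{k}(\gamma(\lambda))=\lambda\,z^{k}(x)$, differentiating in $\lambda$ shows that at the point $\gamma(\lambda)$ one has $\dot\gamma(\lambda)=\tfrac1\lambda\big(V^{\rad}+\overline{V^{\rad}}\big)$, so that $|\dot\gamma(\lambda)|^{2}_{\omega}=\tfrac{2}{\lambda^{2}}|V^{\rad}|^{2}_{\omega}$ and, writing $L_{\omega}(\gamma)$ for the length,
\begin{align*}
L_{\omega}(\gamma)=\int_{0}^{1}|\dot\gamma(\lambda)|_{\omega}\,d\lambda=\sqrt2\int_{0}^{1}\frac1\lambda\,|V^{\rad}|_{\omega}\!\big(\gamma(\lambda)\big)\,d\lambda .
\end{align*}
Thus the whole statement follows once we know that $|V^{\rad}|^{2}_{\omega}\le C\,r$ on $\pi^{-1}(D)$ for a constant $C$ independent of $t$; the exponent $1/2$ in the conclusion will come precisely from matching this linear vanishing of $|V^{\rad}|^{2}_{\omega}$ against the factor $1/\lambda$ above.

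To prove $|V^{\rad}|^{2}_{\omega}\le C r$, observe that $V^{\rad}$ is holomorphic on $\pi^{-1}(D)$, so \eqref{holoc_vf_bound} gives $(\partial_{t}-\Delta)\log|V^{\rad}|^{2}_{\omega}\le 0$ there, while for any index $j$ the function $\log|z^{j}|^{2}$ is pluriharmonic off $\{z^{j}=0\}$ and hence annihilated by $\partial_{t}-\Delta$. Consequently $|V^{\rad}|^{2}_{\omega}/|z^{j}|$ — which, in the blow-up chart $\{l^{j}\neq0\}$ with coordinates $w^{j}=z^{j}$ transverse to $E$ and $w^{m}=z^{m}/z^{j}$, equals $|z^{j}|\,g_{w^{j}\bar w^{j}}$ because $V^{\rad}=w^{j}\partial_{w^{j}}$ there — satisfies $(\partial_{t}-\Delta)\log\!\big(|V^{\rad}|^{2}_{\omega}/|z^{j}|\big)\le 0$, so it obeys the maximum principle. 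On the parabolic boundary it is controlled: at $t=0$ it is bounded because $V^{\rad}$ vanishes to first order along $E$; on the part of the boundary staying away from $E$ it is bounded by the uniform estimates on compact subsets away from the divisor; the delicate part is the portion of the boundary running into $E$, where one passes to a neighbouring affine chart $\{l^{k}\neq0\}$ of $E\cong\mathbb{P}^{n-1}$ and uses the transition law $g_{w^{j}_{(j)}\bar w^{j}_{(j)}}=|l^{k}/l^{j}|^{2}\,g_{w^{k}_{(k)}\bar w^{k}_{(k)}}$ together with the fact that on that overlap $l^{k}$ is the dominant coordinate, closing a single bootstrap estimate over the finitely many charts covering $E$. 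I expect this last point to be the main obstacle: choosing the region for the maximum principle and controlling its parabolic boundary where it approaches $E$, since there the coordinate-wise bounds coming from \eqref{lemma1_bd} only give $g_{w^{j}\bar w^{j}}\le C/|z^{j}|^{2}$, which is too weak on its own. This is exactly the place where the flexibility of choosing the holomorphic vector field, and the geometry of $E$, must be exploited, as emphasized in the introduction.

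Granting the estimate $|V^{\rad}|^{2}_{\omega}\le C r$, along $\gamma$ we have $r(\gamma(\lambda))=\lambda|x|$, hence $|V^{\rad}|^{2}_{\omega}(\gamma(\lambda))\le C\lambda|x|$, and therefore
\begin{align*}
L_{\omega}(\gamma)\le\sqrt2\int_{0}^{1}\frac1\lambda\sqrt{C\lambda|x|}\,d\lambda=\sqrt{2C|x|}\int_{0}^{1}\lambda^{-1/2}\,d\lambda=2\sqrt{2C}\,|x|^{1/2},
\end{align*}
which is the asserted bound, with a constant independent of $t$ and of $x$. The same reduction, with $V^{\rad}$ replaced by the $V^{k}_{\ell}$, is what drives Lemma~\ref{sphere_diameter_bound}, so these two lemmas together will give the uniform diameter bound of $X$ along the flow.
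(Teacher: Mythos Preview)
Your reduction of the length computation to a pointwise bound on $|V^{\rad}|^{2}_{\omega}$ is correct, and the target estimate $|V^{\rad}|^{2}_{\omega}\le Cr$ along radial lines is exactly what one needs. The gap you flag, however, is real and your proposed patch does not close it. The quantity $|V^{\rad}|^{2}_{\omega}/|z^{j}|$ is \emph{not} bounded on $\pi^{-1}(D)$ even at $t=0$: in a chart $\{l^{k}\neq 0\}$ with $k\neq j$ you computed (correctly) that it equals $|w^{k}|\,g_{w^{k}\bar w^{k}}\cdot|l^{k}/l^{j}|$, which blows up along the proper transform of $\{z^{j}=0\}$. Restricting to the region where $|z^{j}|$ is dominant only moves the problem to the internal boundaries $\{|z^{j}|=|z^{k}|\}$, where the estimate you need to impose as boundary data is the very estimate you are trying to prove; there is no evident bootstrap that closes over finitely many such regions.

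The paper avoids this entirely by replacing the pluriharmonic weight $1/|z^{j}|$ with a second \emph{holomorphic vector field norm}: one applies the maximum principle to
\[
Q_{\varepsilon}=\log\!\big(|V^{\rad}|^{2(1+\varepsilon)}_{\omega}\,\big|\tfrac{\partial}{\partial z^{1}}\big|^{2}_{\omega}\big),
\]
which again satisfies $(\partial_{t}-\Delta)Q_{\varepsilon}\le 0$ by \eqref{holoc_vf_bound}. The point of the extra $\varepsilon$ is that, using the already-proved bound $g_{1\bar1}\le C/r^{2}$ from Lemma~\ref{sphere_diameter_bound} together with $|V^{\rad}|^{2}_{\omega}\sim_{t} r^{2}$ at each fixed time, one has $Q_{\varepsilon}\to-\infty$ as $x\to E$; hence the maximum is pushed away from $E$ and is controlled by the uniform estimates on $\partial\pi^{-1}(D)$ and at $t=0$. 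Letting $\varepsilon\to 0$ yields $|V^{\rad}|^{2}_{\omega}\,g_{1\bar1}\le C$, and restricting to the axis $\{z^{2}=\cdots=z^{n}=0\}$ gives $|z^{1}|^{2}g_{1\bar1}^{2}\le C$, i.e.\ $g_{1\bar1}\le C/|z^{1}|$; after a unitary rotation this handles any radial direction and feeds directly into your length integral. The moral is that the right partner for $|V^{\rad}|^{2}_{\omega}$ is $|\partial_{z^{i}}|^{2}_{\omega}$, not $1/|z^{i}|$: both yield the same heat-operator inequality, but only the former, combined with the $\varepsilon$-trick, has controllable boundary behaviour near $E$.
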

\begin{proof}
We will show that on the complex line $\{0=z^1=\ldots=\widehat{z^i}=\ldots=z^n\}$,
\begin{align*}
g_{i\bar{i}}\le \frac{C}{|z^i|} 
\end{align*}
where for the purposes of demonstration we consider the case $i=1$. Define the quantity $Q_\varepsilon = \log(|V^{\rad}|^{2(1+\varepsilon)}_\omega\left|\frac{\partial}{\partial z^1}\right|^2_\omega) $ on $D$. For a fixed $t$, it can be shown that $|V^{\rad}|^2_\omega$ is uniformly equivalent to $r^2$ in $D$ by a straightforward computation using the model metric in Equation \eqref{model_metric}. By the previous lemma, $g_{1\bar{1}}\le \frac{C}{r^2}$ implies that $Q_\varepsilon$ tends to $-\infty$ as $x\rightarrow E$. Thus, a maximum must occur on $D\backslash\{0\}$ on which we have that
\begin{align*}
(\partial_t - \Delta)Q_\varepsilon = (1+\varepsilon)(\partial_t - \Delta)\log|V^{\rad}|^2_\omega+(\partial_t-\Delta)\log \left|\tfrac{\partial}{\partial z^1}\right|^2_\omega \le 0
\end{align*}
where the inequality follows from Equation \eqref{holoc_vf_bound} since both $V^{\rad}$ and $\frac{\partial}{\partial z^1}$ are holomorphic vector fields on $X$ away from the origin. Letting $\varepsilon\rightarrow 0$, this gives us the following bound
\begin{align*}
|V^{\rad}|^2_\omega \left|\tfrac{\partial}{\partial z^1}\right|^2_\omega \le C 
\end{align*}
Restricting to the complex line $\{0=z_2=\ldots z_n\}$, we have that
\begin{align*}
g_{1\bar{1}}|z^1|^2 g_{1\bar{1}}\le C  \ \ \Rightarrow \ \ g_{1\bar{1}}\le \frac{C}{|z^1|}.
\end{align*}
The same argument can then be used for all $i=2,\ldots, n$, to get that on $\{0=z_1=\ldots=\widehat{z}_i=z_n\}$ we have
\begin{align*}
g_{\mathbb{R}}\big(\frac{\partial}{\partial x^i},\frac{\partial}{\partial x^i}\big)=2g_{i\bar{i}}\le \frac{C}{|z^i|}
\end{align*}
Without loss of generality, we may assume that $x$ lies in a coordinate direction from the origin. For the path $\gamma(\lambda)=\lambda x$ for $\lambda\in (0,1]$ emanating from the blow-up point, we can show that 
\begin{align*}
g_{\gamma(\lambda)}(\gamma'(\lambda),\gamma'(\lambda))\le \frac{C|x|}{\lambda}
\end{align*}

whose square root when integrated over $\lambda\in (0,1]$ gives us the desired bound on the length of $\gamma$.
\end{proof}
Combining Lemma \ref{sphere_diameter_bound} and Lemma \ref{radial_path_bound}, we are ready to prove the following result by \cite{song-weinkove13}:

\begin{thm}
Let $\omega(t)$ be a smooth solution to the K\"ahler-Ricci flow for $t\in [0,T)$ and assume $T<\infty$ on a manifold $X$ with disjoint exceptional divisors $E_1,\ldots, E_k$. Assume there exists a blow-up map $\pi: X\rightarrow Y$ for $(Y,\omega_Y)$ a compact manifold such that $\pi(E_i)=y_i\in Y$ and that the initial K\"ahler class satisfies
\begin{align*}
[\omega_0]-2Tc_1(X) = [\pi^* \omega_Y] 
\end{align*}
Then the diameter of $X$ with respect to the evolving metric $\omega(t)$ is uniformly bounded for all $t\in [0,T)$.
\end{thm}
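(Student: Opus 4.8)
The plan is to assemble the uniform diameter bound from the three uniform estimates now available: the interior estimates for $\omega(t)$ on compact subsets of $X$ away from $E_1,\dots,E_k$ (obtained in \cite{tian-zhang06} from Lemmas \ref{volume_bound} and \ref{metric_lower_bound}), the bound on $\diam_{\omega(t)} S_r$ of Lemma \ref{sphere_diameter_bound}, and the bound on the $\omega(t)$-length of radial paths of Lemma \ref{radial_path_bound}. Since the exceptional divisors are disjoint, fix for each $i$ a coordinate ball $D_i$ around $y_i$ with coordinates $z^1,\dots,z^n$ as above, set
\begin{align*}
U \;=\; X \setminus \bigcup_{i=1}^k \pi^{-1}\bigl(\{|z|<\tfrac12\}\bigr),
\end{align*}
and note that $\overline U$ is a compact subset of $X$ disjoint from $\bigcup_i E_i$, so there is a constant $C_0$, independent of $t$, with $C_0^{-1}\omega_0 \le \omega(t) \le C_0\,\omega_0$ on $\overline U$ for all $t\in[0,T)$. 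Fix a base point $x_0\in U$. It suffices to produce a constant $C$, independent of $x\in X$ and $t\in[0,T)$, with $d_{\omega(t)}(x,x_0)\le C$; then $\diam(X,\omega(t))\le 2C$.

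If $x\in U$, then since $U$ is connected and $\omega(t)\le C_0\omega_0$ on $\overline U$ we get $d_{\omega(t)}(x,x_0)\le \sqrt{C_0}\,d_{\omega_0}(x,x_0)\le \sqrt{C_0}\,\diam_{\omega_0}(\overline U)=:C_1$, uniform in $t$. The main point is the estimate for $x\in\pi^{-1}(\{|z|<\tfrac12\})$ for some $i$, say with $r:=|x|\in(0,\tfrac12)$ (identifying $\pi^{-1}(D_i\setminus\{0\})$ with $D_i\setminus\{0\}$). First, move along the sphere $S_r\subset D_i$ from $x$ to a point $\hat x\in S_r$ lying on a coordinate axis through the origin; by Lemma \ref{sphere_diameter_bound}, $d_{\omega(t)}(x,\hat x)\le \diam_{\omega(t)} S_r \le C_2$. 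Next, run the radial vector field \emph{outward} from $\hat x$: set $\gamma(\lambda)=\tfrac12\lambda\,\hat x/r$ for $\lambda\in[2r,1]$, a path from $\hat x=\gamma(2r)$ to $\hat x'=\gamma(1)\in\pi^{-1}(\{|z|=\tfrac12\})\subset U$, which stays on the coordinate axis through $\hat x$. Since $|\hat x'|=\tfrac12$, the estimate established in the proof of Lemma \ref{radial_path_bound} gives $g_{\gamma(\lambda)}(\gamma'(\lambda),\gamma'(\lambda))\le C|\hat x'|/\lambda = C/(2\lambda)$, so that
\begin{align*}
\operatorname{length}_{\omega(t)}(\gamma)\;\le\;\int_0^1\sqrt{\tfrac{C}{2\lambda}}\;d\lambda\;=\;\sqrt{2C}\;=:\;C_3,
\end{align*}
uniform in $t$ and $x$. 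Hence $d_{\omega(t)}(x,x_0)\le d_{\omega(t)}(x,\hat x)+\operatorname{length}_{\omega(t)}(\gamma)+d_{\omega(t)}(\hat x',x_0)\le C_1+C_2+C_3$. Finally, for $x\in E_i$, pick $x_\ell\in\pi^{-1}(\{0<|z|<\tfrac12\})$ with $x_\ell\to x$ in $X$; by the previous step $d_{\omega(t)}(x_\ell,x_0)\le C_1+C_2+C_3$ for all $\ell$, and since for each fixed $t$ the form $\omega(t)$ is a smooth K\"ahler metric on all of $X$, the distance $d_{\omega(t)}$ is continuous, so $d_{\omega(t)}(x,x_0)\le C_1+C_2+C_3$. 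Taking $C=C_1+C_2+C_3$ finishes the proof.

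I expect no substantial analytic difficulty to remain once Lemmas \ref{sphere_diameter_bound} and \ref{radial_path_bound} are in place — the real work, and the use of the holomorphic-vector-field maximum principle \eqref{holoc_vf_bound}, lives there. The points that need care in the assembly are: (i) reparametrizing the radial path so it runs from a point near $E_i$ \emph{out} to a fixed-radius sphere inside $U$ rather than down into $E_i$ — this is legitimate precisely because the radial metric degeneration $g_{i\bar i}\lesssim |z^i|^{-1}$ is integrable against arc length, i.e. $\int_0^1\lambda^{-1/2}\,d\lambda<\infty$; (ii) checking that every constant produced is genuinely independent of $t$, which holds because each input bound is; and (iii) the harmless limiting argument needed to include points lying on the exceptional divisors themselves. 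A purely cosmetic point is that one should take the coordinate balls $D_i$ slightly larger than the unit ball (or shrink the inner radius $\tfrac12$) so that $\{|z|=\tfrac12\}$ and everything outside it lies comfortably in the region where the interior estimates apply.
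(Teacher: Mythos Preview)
Your proof is correct and follows essentially the same route as the paper's: both combine the interior estimates away from the $E_i$ with Lemma \ref{sphere_diameter_bound} and Lemma \ref{radial_path_bound} to connect any point in a punctured coordinate ball to the good region via a sphere-walk plus a radial segment, and both handle points on $E_i$ by the same limiting argument. Your write-up simply makes the assembly more explicit (fixing a base point, specifying the outward reparametrization of the radial path, and spelling out the compact exhaustion) than the paper's terse version.
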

\begin{proof}
From Lemma \ref{sphere_diameter_bound} and Lemma \ref{radial_path_bound}, we have a uniform bound on the diameter of $X$ when $p,q$ do not lie on $E$ since any two points in $\pi^{-1}(D\backslash \{ 0\})$ are connected by a combination of radial paths in coordinate directions and walks along spheres of radius less than 1. Let us assume without loss of generality that $p$ lies on the exceptional divisor $E$. We can take limiting sequences $(p_j)_{j=1}^\infty$ such that $p_j\rightarrow p$ with respect to any fixed metric and $p_j \in D\backslash \{0\} \ \forall j=1,\ldots,\infty$. Since $d_{\omega}(p_j,q)\le C$ for a uniform $C$ independent of $j$, we may conclude that this holds for all $j$ and thus indeed holds for $p,q$.
\end{proof}

We note a few key differences between our proof and that of \cite{song-weinkove13}. In bounding the diameters of spheres, their method bounds the trace using a maximum principle argument involving controlling bad terms using the K\"ahler potential. In contrast, our proof computes bounds on the evolving metric in each direction using the holomorphic vector fields that we have chosen. In bounding the lengths of radial paths, \cite{song-weinkove13} uses the radial holomorphic vector field $V^{\rad}=z^i\frac{\partial}{\partial z^i}$ coupled with a trace term. We use the same radial holomorphic vector field but pair it with the norm of the holomorphic vector field $\frac{\partial}{\partial z^j}$ in lieu of the trace to obtain the bound in each of the $z^j$ directions pointing away from the blow-up point.

\section{Collapsing Fano component of product manifold}
In this section, we prove Theorem 1 in the product case. Let $X=B\times F$ where $B$ is a compact K\"ahler manifold of dimension $n-m$ and $F$ is a compact Fano manifold of dimension $m$. We are working in the setting where we collapse the entire fiber component $F$ of $X$ along the K\"ahler-Ricci flow. In the next section, we will describe how this can be generalized to a compact manifold whose projective base manifold is covered by Zariski open sets over which fibers are trivial.

Assume that $F\cong \Bl_p\mathbb{P}^{m}$ which is a Fano manifold and so will be collapsed by the K\"ahler-Ricci flow in finite time. Let $\pi: F\rightarrow \mathbb{P}^m$ be the blow-up map and assume that the initial K\"ahler class satisfies
\begin{align*}
[\omega_0]-Tc_1(X)=[\pr^*_B\omega_B]
\end{align*}
where $\pr_B : B\times F \rightarrow B$ is the projection onto the base component. In addition, we assume that the initial metric $\omega_0$ is not a product metric since otherwise the flow will deform the metrics on $B$ and $F$ independently until the volume of $F$ vanishes. 
We note that unlike in the previous case, we no longer have uniform estimates of the metric on $X$ away from the exceptional divisors on the fiber manifold. For this reason, we will use the existence of certain global holomorphic vector fields on $F$ to clarify the estimates of \cite{fu-zhang17}.

For the rest of the section, we will be working ``downstairs" in $\mathbb{P}^m$ via the blow-up map $\pi:F\rightarrow \mathbb{P}^m$, and for simplicity refer to $(\pi^{-1})^* \omega$ by $\omega$ and $\pi^{-1}(D_r)$ by $D_r$ . We have on each coordinate chart $$U_k=\{[Z_0:\ldots:Z_{m}]| Z_k\neq 0\}$$ on $\mathbb{P}^{m}$ for $k=0,\ldots, m$, local coordinates $w^1,\ldots,w^m$ where $w^i = \frac{Z_{i-1}}{Z_k}$ on $U_k$ for $1\le i\le k$ and $w^i = \frac{Z_i}{Z_k}$ for $k<i\le m$. Without loss of generality may assume that the blow-up point is at the point $p=[1:0:\ldots:0]$.

Let us define how we will restrict the metric to the base and fiber components. Define the \emph{restriction} of a metric $\omega$ on $B\times F$ to $F$, denoted $\omega|_F$, at each point $(b,f)\in B\times F$ by 
\begin{align}\label{restriction}
\omega|_F(V,\overline{W}) := \omega_{(b,f)}((\iota_{f})_*V,(\iota_{f})_*\overline{W})
\end{align}
where $\iota: F\xhookrightarrow{} B\times F$ given by $\iota_b(f)=(b,f)$ and $V,W$ are vector fields on $F$. We can define $\omega|_B$ in an analogous way.

\subsection{Diameter bound of X}

We now describe how we obtain a diameter bound on $X$ with respect to the evolving metric $\omega=\omega(t)$. Firstly, we will bound the diameter of $B$ by proving that
\begin{align}
\omega|_B\le C(\pr^*_B\omega_B)|_B \label{base_fano_bound},
\end{align}
where $\omega|_B$ is defined as in Equation \eqref{restriction}.

Secondly, to bound the diameter of $F$, we choose $r_0$ sufficiently large such that $\cup_{i=0}^m D^i_{r_0}$ form a cover of $\mathbb{P}^m$, where
\begin{align*}
D^i_{r_0}=\{(\tfrac{Z_0}{Z_i},\ldots, \widehat{\tfrac{Z^i}{Z_i}},\ldots, \tfrac{Z^m}{Z_i}): \sum_{k=0,k\neq i}^m \big|\tfrac{Z_k}{Z_i}\big|^2 \le r_0^2\} \subset U_i.
\end{align*}
We will show the diameter bound on $F$ by showing that on $D^0_{r_0}$, the ball centered around the blow-up point in the coordinates of $U_0$, we have that
\begin{align}
\omega|_F\le \frac{C}{r^2}(\pr^*_F\pi^*\omega_{\mathbb{P}^m})|_F, \label{trace_fano_bound}
\end{align}
where $\text{pr}_F: B\times F \rightarrow F$ is the projection onto the fiber component and $\omega|_F$ is defined as in Equation \eqref{restriction}.

On $D^k_{r_0}$ for $1\le k\le m$ which do not contain the blow-up point, we will show that
\begin{align}
\omega|_F\le C(\pr^*_F\pi^*\omega_{\mathbb{P}^m})|_F \label{trace_fano_no_bp_bound}.
\end{align}

\subsubsection{Diameter bound of $B$}
From the fact that the limiting cohomology class has a component on $B$, we may apply Lemma \ref{metric_lower_bound} to get that $\omega\ge C\pr^*_B\omega_B$. Restricting to $B$ this gives us that 
\begin{align}
\omega|_B\ge C(\pr^*_B\omega_B)|_B .\label{base_bound}
\end{align}
We will prove Equation \eqref{base_fano_bound} by showing a bound on horizontal level sets $H$ on $X$:

\begin{lemma}\label{base_control}
For any $f\in F$ and corresponding level set $H=B\times \{f\}$ we have that 
\begin{align*}
\omega|_H \le C (\pr^*_B\omega_B)|_H.
\end{align*}
\end{lemma}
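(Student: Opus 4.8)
The plan is to prove the reverse inequality to \eqref{base_bound} by applying the maximum principle to the norm of holomorphic vector fields on $B$, exactly in the spirit of Section 3, but now exploiting that $B$ is a factor of the product $X = B \times F$. Fix a point $f_0 \in F$ and the horizontal level set $H = B \times \{f_0\}$. Since $B$ is compact, we cannot use a single globally defined holomorphic frame, so instead I would cover $B$ by finitely many coordinate charts $\{U_\alpha\}$ with coordinates $(u^1_\alpha, \dots, u^{n-m}_\alpha)$, and on each $U_\alpha$ consider the coordinate holomorphic vector fields $\partial/\partial u^a_\alpha$, $a = 1, \dots, n-m$. Pulled back via $\mathrm{pr}_B$, each $\partial/\partial u^a_\alpha$ is a holomorphic vector field on $\mathrm{pr}_B^{-1}(U_\alpha) = U_\alpha \times F \subset X$ (here the product structure is essential: a holomorphic vector field on $B$ lifts to a holomorphic vector field on $B \times F$).

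First I would observe that on $U_\alpha \times F$ the quantity $|\partial/\partial u^a_\alpha|^2_{\omega(t)}$ satisfies $(\partial_t - \Delta)\log|\partial/\partial u^a_\alpha|^2_{\omega(t)} \le 0$ by \eqref{holoc_vf_bound}. The subtlety is that $U_\alpha \times F$ is not relatively compact in $X$ — its closure contains $\partial U_\alpha \times F$ — so I would instead work on a slightly smaller chart $U'_\alpha \Subset U_\alpha$, still covering $B$, and control the value of $|\partial/\partial u^a_\alpha|^2_{\omega(t)}$ on the spatial boundary $\partial U'_\alpha \times F$ as follows. On the overlap region one has uniform estimates? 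No — here is the one genuinely delicate point: unlike in Section 3 we do \emph{not} have uniform estimates on $\omega(t)$ anywhere on $X$, because the fiber $F$ is being collapsed everywhere. So boundary control must come from a different source. The natural candidate is Lemma \ref{volume_bound} together with Lemma \ref{metric_lower_bound}: we have $\omega^n \le C\omega_0^n$ and $\omega \ge c\,\mathrm{pr}_B^*\omega_B$, and the latter already bounds $\omega$ from below in the base directions; combined with the volume upper bound and an arithmetic-geometric mean / linear algebra argument this should yield an \emph{upper} bound for $\omega$ restricted to the base directions in terms of the collapsing factor, hence a bound $|\partial/\partial u^a_\alpha|^2_{\omega(t)} \le C$ valid on all of $X$, and in particular on the boundary. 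Alternatively — and more cleanly — I expect one should run the maximum principle not chart-by-chart but globally: pick finitely many holomorphic vector fields that together span the base directions in a neighborhood of each point, take the max principle quantity to be $\log$ of a sum or max of their norms (or handle each on its chart but with the right boundary input), and conclude that the spatial supremum is nonincreasing or controlled by $t=0$ data, on which $|\partial/\partial u^a_\alpha|^2_{\omega_0} \le C$ trivially.

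The main obstacle, then, is supplying the boundary/initial input to the maximum principle without uniform estimates on $X$: I would resolve it by first establishing a crude global upper bound $\omega|_H \le \Lambda(t)\,(\mathrm{pr}_B^*\omega_B)|_H$ for each fixed $t$ (which holds with $t$-dependent constant simply by smoothness and compactness of $H$), using this to guarantee $|\partial/\partial u^a_\alpha|^2_{\omega(t)}$ is a bounded smooth function to which the maximum principle applies, then using \eqref{holoc_vf_bound} to push the bound back to $t=0$, where $\sup_H |\partial/\partial u^a_\alpha|^2_{\omega_0}$ is a genuine uniform constant depending only on $(X,\omega_0)$ and the choice of charts. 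Summing over the finitely many charts covering $B$ converts the pointwise bounds $g_{a\bar a} \le C$ in each coordinate direction into the claimed bound $\omega|_H \le C(\mathrm{pr}_B^*\omega_B)|_H$.

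Finally I would note that the bound is manifestly independent of the choice of $f = f_0$: the holomorphic vector fields $\partial/\partial u^a_\alpha$ and the evolution inequality \eqref{holoc_vf_bound} make no reference to the fiber coordinate, and the $t=0$ bound is uniform over all of $X \supset B \times \{f\}$. This gives Lemma \ref{base_control}, and combined with \eqref{base_bound} it yields \eqref{base_fano_bound} and hence a uniform bound on $\diam_{\omega(t)} B$.
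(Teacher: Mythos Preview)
Your approach has a genuine gap, and it is exactly the one you flag yourself but never resolve: the boundary control on $\partial U'_\alpha \times F$. None of your three proposed fixes actually works. For (1), from $\omega^n\le C\omega_0^n$ and $\omega\ge c\,\pr_B^*\omega_B$ you only learn that the product of the \emph{fiber} eigenvalues is bounded above; you get no upper bound on any individual base eigenvalue without a positive lower bound in the fiber directions, and the fibers are collapsing. For (2), a general compact K\"ahler base $B$ need not admit any nonzero global holomorphic vector field (e.g.\ $c_1(B)<0$), so you cannot in general choose finitely many global holomorphic fields spanning $T^{1,0}B$ everywhere; you are forced back to local coordinate fields and hence to the chart boundaries. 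For (3), the parabolic maximum principle on $\overline{U'_\alpha}\times F\times[0,t_0]$ says the maximum of $\log|\partial/\partial u^a_\alpha|^2_\omega$ is attained at $t=0$ \emph{or} on the lateral boundary $\partial U'_\alpha\times F\times[0,t_0]$; your crude $t$-dependent bound $\Lambda(t)$ on the lateral boundary does not upgrade to a uniform one, so nothing is gained.

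The paper sidesteps this entirely by working with a \emph{globally defined} scalar, the partial trace $Q=\tr_{(\pr_B^*\omega_B)|_H}\omega|_H$, and running the standard Aubin--Yau/Cao computation (the same one behind Lemma~\ref{ineq} and Lemma~\ref{psl}) to obtain $(\partial_t-\Delta)\log Q\le C\,\tr_{\omega}(\pr_B^*\omega_B)$. The key point is that the bad curvature term on the right is already controlled by the lower bound $\omega\ge c\,\pr_B^*\omega_B$ from \eqref{base_bound}, so $(\partial_t-\Delta)(\log Q-Ct)\le 0$ on the compact manifold $X$ with no spatial boundary, and $T<\infty$ finishes it. In short, for the base estimate the paper deliberately does \emph{not} use the holomorphic vector field trick---that trick works in Section~3 because uniform estimates away from $E$ supply boundary values, and on the fiber in Section~4 because $\mathbb{P}^m$ has an abundance of global holomorphic vector fields; neither mechanism is available on an arbitrary compact base $B$.
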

\begin{proof}
For fixed $f\in F$, we wish to bound the quantity $Q=\text{tr}_{(\text{pr}_B^* \omega_B)|_H} \omega|_H$. Assume that the maximum occurs at a point $x_0=(b_0,f)$ and let us choose normal coordinates $z^1,\ldots, z^n$ for $(\pr_B^*\omega_B)|_H$ at $x_0$ for which $g$ is diagonal where $z^1,\ldots, z^{n-m}$ are coordinates on $B$. Then we can write $Q = \sum_{i,j=1}^{n-m}g_B^{i\bar{j}}g_{i\bar{j}}$. A straightforward computation following similarly to the proof of Lemma \ref{ineq} shows that
\begin{align*}
(\partial_t - \Delta) \log Q &= \frac{1}{Q}\sum_{a,b,i,j=1}^{n-m}\Big(-g^{a\bar{b}}(R_B)_{a\bar{b}}^{\ \ i\bar{j}}g_{i\bar{j}} + g^{a\bar{b}}g^{p\bar{q}}(\partial_a g_{i\bar{q}})(\partial_{\bar{b}}g_{p\bar{j}})g_B^{i\bar{j}} + \frac{|\nabla (g_B^{i\bar{j}}g_{i\bar{j}})|_\omega^2}{Q}\Big) \\
& \le -\frac{1}{Q}\sum_{a,b,i,j=1}^{n-m}g^{a\bar{b}}(R_B)_{a\bar{b}}^{\ \ i\bar{j}}g_{i\bar{j}} \\
& \le C\tr_{\omega|_H}(\pr_B^*\omega_B)|_H,
\end{align*}
where $C$ depends on a lower bound for the holomorphic bisectional curvature of $\omega_B$ and the second last inequality uses two applications of the Cauchy-Schwarz inequality.
Using Equation \eqref{base_bound}, we have that
\begin{align*}
(\partial_t - \Delta)\log Q \le C\text{tr}_{\omega|_H}(\pr^*_B\omega_B)|_H\le C.
\end{align*}
Since $T<\infty$, $(\partial_t-\Delta)(\log Q - Ct)\le 0$ which implies that $Q\le C$, giving us the desired inequality.
\end{proof}
By the compactness of $F$, the above lemma implies Equation \eqref{base_fano_bound}.

\subsubsection{Diameter bound of spheres centered around E}
\begin{lemma}\label{bounds_on_spheres}
There exists a uniform $C$ such that for each point $(b,f)\in B\times F$ 
$$\omega|_F\le \max(\frac{C}{r^2}, C)(\pr^*_F\pi^*\omega_{\mathbb{P}^m})|_F ,$$
where $r^2=\big|\frac{Z_1}{Z_0}\big|^2+\big|\frac{Z_2}{Z_0}\big|^2$ is the distance squared to the blow-up point.
\end{lemma}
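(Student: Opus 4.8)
The plan is to mimic the proof of Lemma \ref{sphere_diameter_bound} from the contracting-divisor case, but now using \emph{global} holomorphic vector fields on $F\cong\Bl_p\mathbb{P}^m$ pulled back to $X=B\times F$, since we no longer have uniform estimates away from the exceptional divisor. First I would fix attention on the chart $U_0$ with coordinates $w^1,\dots,w^m$ centered at the blow-up point $p=[1:0:\cdots:0]$, so that $r^2=|w^1|^2+\cdots+|w^m|^2$ (I will write $r^2=|w^1|^2+|w^2|^2$ when restricting to the relevant model surface, as in the statement). On $\mathbb{P}^m$ the vector fields $w^\ell\,\partial/\partial w^k$ for $k,\ell\in\{1,\dots,m\}$ extend to global holomorphic vector fields on $\mathbb{P}^m$ (they are among the generators of the $\mathfrak{sl}(m+1)$ action), and since each vanishes to order $1$ along $E=\{w=0\}$, each lifts to a global holomorphic vector field $V^k_\ell$ on $F$; composing with the projection $\pr_F:B\times F\to F$ gives a holomorphic vector field on all of $X$, which is what \eqref{holoc_vf_bound} requires. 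This is the step where the product (and later, bundle) structure is essential: on $X$ the vector field $(\pr_F)^*V^k_\ell$ is holomorphic, so $(\partial_t-\Delta)\log|(\pr_F)^*V^k_\ell|^2_{\omega(t)}\le 0$.

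Next I would run the maximum principle for each $|(\pr_F)^*V^k_\ell|^2_{\omega(t)}$ on $X\times[0,T)$. The quantity is smooth and globally defined on the compact manifold $X$, so by \eqref{holoc_vf_bound} its maximum over $X\times[0,t]$ is attained at $t=0$, giving $|(\pr_F)^*V^k_\ell|^2_{\omega(t)}\le \sup_X|(\pr_F)^*V^k_\ell|^2_{\omega_0}=:C$ for all $t$. Now I need to convert this into a bound on $\omega|_F$. Unwinding the definition \eqref{restriction} and the fact that $(\pr_F)_*$ maps the $F$-directions isometrically onto the tangent space used in $\omega|_F$, I get $(\omega|_F)_{k\bar k}\,|w^\ell|^2\le C$ for each $\ell$, hence $(\omega|_F)_{k\bar k}\le C/\max_\ell|w^\ell|^2\le Cm/r^2$ on $D^0_{r_0}$. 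To control the full Hermitian form $\omega|_F$ (not just its diagonal in the $w$-coordinates) I would also use that $\omega|_F$ is, for each fixed $t$, comparable to the model metric $\pi^*\omega_{\mathbb{P}^m}+\tau^*\omega_{FS}$ as in \eqref{model_metric}, so that a diagonal bound in these coordinates upgrades to $\omega|_F\le (C/r^2)(\pr_F^*\pi^*\omega_{\mathbb{P}^m})|_F$ near $E$; away from $E$ — i.e.\ on the region of $D^0_{r_0}$ where $r\ge c>0$, and on the charts $U_k$ with $k\ge 1$ not meeting the blow-up point — the same argument with the vector fields $w^\ell\,\partial/\partial w^k$ (now nonvanishing) gives the clean bound $\omega|_F\le C(\pr_F^*\pi^*\omega_{\mathbb{P}^m})|_F$. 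Combining these two regimes yields the stated $\max(C/r^2,\,C)$ bound uniformly in $(b,f)$ and $t$.

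The main obstacle I anticipate is not the maximum principle itself but the bookkeeping needed to pass from norms of the individual $V^k_\ell$ to a genuine upper bound on the \emph{tensor} $\omega|_F$ relative to the fixed background $(\pr_F^*\pi^*\omega_{\mathbb{P}^m})|_F$: the vector fields $w^\ell\partial/\partial w^k$ only span the tangent space away from $E$, and near $E$ one must be careful that the directions tangent to $E$ (encoded by the Fubini–Study factor in \eqref{model_metric}) are also controlled — this is exactly why the $t$-dependent comparison with the model metric is invoked, to ensure the relevant off-diagonal and $E$-tangential components cannot blow up faster than $1/r^2$. A secondary point to check is that $\pi^*\omega_{\mathbb{P}^m}$ is only a smooth semipositive form on $F$ (degenerate along $E$), so the inequality \eqref{trace_fano_bound} should be read as a bound on $\omega|_F$ by a multiple of this semipositive form plus, implicitly, the uniformly-controlled contributions along $E$; stating it as in the lemma with the $1/r^2$ factor absorbs precisely the near-$E$ behavior. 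Once this is set up, letting the cover $\{D^i_{r_0}\}$ vary over $i$ and using compactness of $B$ gives the uniform constant claimed.
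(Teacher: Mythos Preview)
Your approach is the same as the paper's in spirit---lift global holomorphic vector fields on $\mathbb{P}^m$ that vanish at $p$ to $F$ and then to $X=B\times F$, and apply \eqref{holoc_vf_bound}---and your treatment of the chart $U_0$ containing the blow-up point is essentially identical to the paper's. There is, however, a genuine gap in your handling of the charts $U_k$ for $k\ge 1$.

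You propose to use only the family $V^k_\ell=w^\ell\,\partial/\partial w^k$ (in $U_0$-coordinates), which correspond to $Z_\ell\,\partial/\partial Z_k$ for $k,\ell\in\{1,\dots,m\}$. On $U_k$ (with $k\ge 1$) the first local coordinate is $w^1=Z_0/Z_k$, and a direct check shows that none of the $V^i_j$ has a nonzero $\partial/\partial w^1$-component along the hyperplane $\{Z_0=0\}\subset U_k$: the only $V^i_j$ with any $\partial/\partial w^1$-component on $U_k$ are those with $i=k$, and on $U_k$ these become $-w^{j'}\sum_\ell w^\ell\,\partial/\partial w^\ell$, whose $\partial/\partial w^1$-coefficient is $-w^{j'}w^1$, vanishing when $w^1=0$. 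So your collection does not span $T_fF$ at those points, and the maximum-principle bounds on $|V^i_j|^2_\omega$ give you no control of $g_{1\bar 1}$ there. The paper fixes exactly this by also using the radial fields $V^{\rad}_i=w^i\sum_\ell w^\ell\,\partial/\partial w^\ell$ on $U_0$ (i.e.\ $Z_i\,\partial/\partial Z_0$), which vanish at $p$, hence lift to $F$, and which transform on $U_k$ precisely to $\partial/\partial w^1$. Adding these $m$ fields to your list closes the gap.

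Two smaller remarks. First, your worry about ``off-diagonal'' terms is unnecessary: once you have $g_{k\bar k}\le C/r^2$ for each $k$ in a fixed coordinate system, positive-definiteness of $g$ gives $g\le (\sum_k g_{k\bar k})\,\delta$, hence $\omega|_F\le (C'/r^2)\,\omega_{\Eucl}$, and since $\omega_{\mathbb{P}^m}$ is uniformly equivalent to $\omega_{\Eucl}$ on $D^0_{r_0}$ you get the stated bound directly---no $t$-dependent comparison with the model metric is needed (nor would it help, since that comparison is not uniform in $t$). Second, the model-metric comparison \eqref{model_metric} is used only at $t=0$, to check that $\sup_X |V^k_\ell|^2_{\omega_0}<\infty$ so the maximum principle can be initialized; it plays no role in upgrading diagonal bounds.
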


\begin{proof}
To achieve a bound on $F$, we may work ``downstairs" on $\mathbb{P}^m$. We will bound the evolving metric in local coordinates on each coordinate chart. We begin by proving the desired bounds on $U_0$, the only chart that contains a blow-up point. It is well known that $\dim(H^0(\mathbb{P}^m,T^{1,0}\mathbb{P}^m))=(m+1)^2-1$ by the fact that $\text{Aut}(\mathbb{P}^m) = \text{PGL}(m+1,\mathbb{C})$. It can be shown that $H^0(\mathbb{P}^m,T^{1,0}\mathbb{P}^m)$ is spanned by, written in the local coordinates $w^1,\ldots, w^m$ of $U_0$,  $m$ holomorphic vector fields of the form $$V^i = \frac{\partial}{\partial w^i},$$ for $i=1,\ldots, m$, followed by $m^2$ holomorphic vector fields of the form $$V^i_j=w^j\frac{\partial}{\partial w^i},$$ for $i,j=1,\ldots, m$ and $m$ holomorphic vector fields of the form $$V^{\rad}_i = w^i\sum_{\ell=1}^m w^\ell \frac{\partial}{\partial w^\ell},$$ for $i=1,\ldots, m$.

Using the fact that a holomorphic vector field on $M$ vanishing at a point $p$ extends to a holomorphic vector field on $\Bl_p M$, the above-defined vector fields $V^i_j$ and $V^{\rad}_i$ for all $i,j=1,\ldots,m$ lift to $F\cong \Bl_p\mathbb{P}^m$ since they each vanish at the blow-up point $p=[1:0:\ldots:0]$. From there, each of these holomorphic vector fields can then be trivially extended to all of $X$. 

Since $|V^i_j|^2_\omega$ is a globally-defined smooth quantity on $X$ and $V^i_j$ is holomorphic, we have by Equation \ref{holoc_vf_bound} that
\begin{align*}
(\partial_t - \Delta)\log(|V^i_j|^2_\omega) \le 0.
\end{align*}
Using the maximum principle, a maximum occurs at $t=0$, and so $|V^i_j|^2_\omega \le C|V^i_j|^2_{\omega_0}$ which implies that $g_{i\bar{i}}\le \frac{C}{|z^j|^2}$. Since this holds for each pair $(i,j)$, it follows that $g_{i\bar{i}}\le \frac{C}{r^2}$ for all $i=1,\ldots, m$. On $D^0_{r_0}$, this gives us that $\omega|_F \le \frac{C}{r^2}(\pr^*_F\pi^*\omega_{\mathbb{P}^m})|_F$.

On each $U_k$ for $1\le k\le m$, it can be straightforwardly checked that $V^{\rad}_k$ defined above on $U_0$ transforms onto the local coordinates $w^1,\ldots,w^m$ of $U_k$ as $\frac{\partial}{\partial w^1}$. In addition, $V^i_k$ transforms onto $U_k$ as $\frac{\partial}{\partial w^{i+1}}$ for $i<k$ and as $\frac{\partial}{\partial w^i}$ for $i>k$. Since $V^{\rad}_k$ can be lifted to $\Bl_p\mathbb{P}^m$ and trivially extended to $X$, by Equation \ref{holoc_vf_bound}, we have that
\begin{align*}
(\partial_t - \Delta)\log(|V^{\rad}_k|^2_\omega)\le 0
\end{align*}
and, thus, a maximum must occur at $t=0$ giving us that $|V^{\rad}_k|^2_\omega\le C$ and so on $U_k$ this implies that $g_{1\bar{1}}\le C$. Since we have already previously shown that $|V^i_k|^2_\omega\le C$, this gives us that on $U_k$, $g_{i\bar{i}}\le C$ for all $i=2,\ldots, m$. Together, this gives us that on $D^k_{r_0}$ that $\omega|F\le C\pr^*_F\pi^*\omega_{\mathbb{P}^m}$.
\end{proof}

We note that our proof differs from the one used by \cite{fu-zhang17} in several ways. Firstly, although we use Lemma \ref{base_control} to control the bounds on the base in the same way, our estimates on the fiber completely rely on the existence of the specific holomorphic vector fields instead of using sections vanishing along the exceptional divisors. Reiterating what we mentioned in the previous section, our method provides estimates on the metric in the directions of particular holomorphic vector fields instead of bounding the trace with respect to a singular metric. In this way, we are not using the positivity of the bisectional curvature of the Fubini-Study metric on $\mathbb{P}^m$ but are instead using the symmetries of $\mathbb{P}^m$ and the global holomorphic vector fields that it permits.

\subsubsection{Bounds on lengths of radial paths}
Combining what we showed in the previous section, that $\omega \le C\omega_0$ on each $D_{r_0}\backslash D_{\delta}$, and that $\omega^n\le C\omega_0^n$ by Lemma \ref{volume_bound}, we indeed have uniform estimates for $\omega$ away from the exceptional divisors. Showing bounds on the lengths of radial paths follows straightforwardly from the argument of \cite{song-weinkove13} but we include a proof here using holomorphic vector fields.

\begin{lemma}
The length of the radial path on the fiber emanating from the blow-up point to a point $x$ on the same coordinate chart $\gamma(\lambda)=\lambda x$ for $\lambda\in(0,1]$ with respect to $\omega$ is bounded by a uniform constant multiple of $|x|^{1/2}$.\label{lengths_of_radial_paths}
\end{lemma}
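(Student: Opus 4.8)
The plan is to imitate the structure of the proof of Lemma \ref{radial_path_bound} from Section 3, but using the \emph{global} holomorphic vector fields on $F\cong\Bl_p\mathbb{P}^m$ that were introduced in the proof of Lemma \ref{bounds_on_spheres}, since in the bundle setting we have no \emph{a priori} local estimates near the exceptional divisor. Working downstairs on $\mathbb{P}^m$ in the chart $U_0$ containing the blow-up point, with local coordinates $w^1,\dots,w^m$, I would set up the barrier quantity
\[
Q_\varepsilon \;=\; \log\!\Big(|V^{\rad}_1|^{2(1+\varepsilon)}_{\omega}\,\big|\tfrac{\partial}{\partial w^1}\big|^2_{\omega}\Big),
\]
where $V^{\rad}_1 = w^1\sum_{\ell=1}^m w^\ell\,\partial/\partial w^\ell$ lifts to $F$ (it vanishes at $p$) and is then trivially extended to $X$, while $\partial/\partial w^1$ is holomorphic on $\mathbb{P}^m\setminus\{p\}$ and hence on $F$ away from $E$; both are holomorphic on the relevant region, so Equation \eqref{holoc_vf_bound} applies to each factor and gives $(\partial_t-\Delta)Q_\varepsilon\le 0$ on $(D^0_{r_0}\setminus E)\times[0,T)$.

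Next I would check that the maximum of $Q_\varepsilon$ cannot escape to the exceptional divisor. For fixed $t$ one computes, using the model metric as in Equation \eqref{model_metric}, that $|V^{\rad}_1|^2_{\omega}$ is comparable to $r^4$ (note the extra factor $w^1$ compared with the $V^{\rad}$ of Section 3, giving $|V^{\rad}_1|^2_\omega \sim |w^1|^2 r^2$, hence at worst $\lesssim r^4$ near $E$), while Lemma \ref{bounds_on_spheres} gives $|\partial/\partial w^1|^2_\omega = g_{1\bar 1}\le C/r^2$; combining, $Q_\varepsilon\to -\infty$ as $x\to E$ for any $\varepsilon>0$. Therefore the supremum of $Q_\varepsilon$ over $\overline{D^0_{r_0}}\times[0,T)$ is attained either at $t=0$ or on the spatial boundary $\partial D^0_{r_0}$, where we have the uniform estimates $\omega\le C\omega_0$ established just before the lemma; in either case $Q_\varepsilon\le C$ with $C$ independent of $\varepsilon$. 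Letting $\varepsilon\to 0$ yields $|V^{\rad}_1|^2_{\omega}\,|\partial/\partial w^1|^2_{\omega}\le C$. Restricting to the complex line $\{w^2=\dots=w^m=0\}$ this reads $g_{1\bar 1}\,|w^1|^2\,g_{1\bar 1}\le C$, i.e. $g_{1\bar 1}\le C/|w^1|$ on that line; running the analogous argument with $V^{\rad}_i$ and $\partial/\partial w^i$ for $i=2,\dots,m$ gives $g_{i\bar i}\le C/|w^i|$ on the corresponding coordinate line, hence $g_{\mathbb{R}}(\partial_{x^i},\partial_{x^i}) = 2g_{i\bar i}\le C/|w^i|$ there.

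Finally I would assemble the length bound exactly as in Lemma \ref{radial_path_bound}: after a unitary change of coordinates we may assume $x$ lies along a coordinate axis, say the $w^1$-axis, so that along $\gamma(\lambda)=\lambda x$, $\lambda\in(0,1]$, one has $\gamma'(\lambda)=x$ and
\[
g_{\gamma(\lambda)}\big(\gamma'(\lambda),\gamma'(\lambda)\big)\;\le\; \frac{C|x|^2}{|\lambda x|}\;=\;\frac{C|x|}{\lambda},
\]
whose square root integrates over $\lambda\in(0,1]$ to a bound of the form $C|x|^{1/2}$, as claimed. I expect the main obstacle to be the barrier/blow-up analysis in the second paragraph — verifying the precise rate at which $|V^{\rad}_i|^2_\omega$ degenerates near $E$ (so that the product really does tend to $-\infty$ and the $\varepsilon$-regularization can be removed with a uniform constant), and making sure the boundary term on $\partial D^0_{r_0}$ is genuinely controlled by the away-from-$E$ estimates rather than by something $t$-dependent; the rest is a direct transcription of the Section 3 argument.
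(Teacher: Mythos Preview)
Your overall strategy mirrors the paper's, but there is a genuine error in the choice of vector field that breaks the key estimate. You use $V^{\rad}_1 = w^1\sum_{\ell} w^\ell\,\partial/\partial w^\ell$; on the line $\{w^2=\dots=w^m=0\}$ this is $(w^1)^2\,\partial/\partial w^1$, so $|V^{\rad}_1|^2_\omega = |w^1|^4\,g_{1\bar 1}$, not $|w^1|^2\,g_{1\bar 1}$. Thus your restricted inequality should read $|w^1|^4 g_{1\bar 1}^2 \le C$, which only gives $g_{1\bar 1}\le C/|w^1|^2$ --- exactly the bound you already had from Lemma \ref{bounds_on_spheres}, and not integrable along the radial path (the square root $\sim 1/\lambda$ diverges). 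The line ``this reads $g_{1\bar 1}\,|w^1|^2\,g_{1\bar 1}\le C$'' is simply miscomputed.

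The paper instead uses the plain Euler field $V^{\rad}=\sum_{\ell} w^\ell\,\partial/\partial w^\ell$, which also vanishes at $p$ (indeed $V^{\rad}=\sum_i V^i_i$) and hence lifts to $F$ and extends to $X$. With this choice, on the same line $|V^{\rad}|^2_\omega=|w^1|^2 g_{1\bar 1}$, and the maximum principle yields $|w^1|^2 g_{1\bar 1}^2\le C$, i.e.\ $g_{1\bar 1}\le C/|w^1|$, which integrates to the desired $C|x|^{1/2}$. Note also that with $V^{\rad}$ one has only $|V^{\rad}|^2_\omega\sim C(t)r^2$ for fixed $t$, so the $(1+\varepsilon)$ exponent is genuinely needed to force $Q_\varepsilon\to-\infty$ near $E$; your extra factor of $|w^1|^2$ made that step easier but cost you exactly the power of $|w^1|$ you need at the end. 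Replace $V^{\rad}_1$ by $V^{\rad}$ throughout and your argument goes through verbatim and matches the paper's.
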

\begin{proof}
As in the last section, we will prove that on the complex line $$\{0=z^1=\ldots=\widehat{z^i}=\ldots=z^n\},$$ we have
\begin{align*}
g_{i\bar{i}}\le \frac{C}{|z^i|}.
\end{align*}
Let us consider the case $i=1$, as the estimates for $i=2,\ldots,n$ follow precisely in the same manner.
We will use the radial vector field we introduced in the proof of the previous theorem defined on each chart's local coordinates $w^1,\ldots,w^m$ by $V^{\rad}= \sum_{\ell=1}^{m}w^\ell \frac{\partial}{\partial w^\ell}$. Since we have bounds away from the exceptional divisors from the previous section, we will work only on $D_1$. Consider the quantity $$Q_\varepsilon = \log(|V^{\rad}|^{2(1+\varepsilon)}_{\omega}\left|\tfrac{\partial}{\partial z^1}\right|^2_{\omega}).$$ For fixed $t$, $Q_\varepsilon\rightarrow -\infty$ as $x\rightarrow E$ since $\left|\frac{\partial}{\partial z^1}\right|^2_\omega \le \frac{C(t)}{r^2}$ and $|V^{\rad}|^2_{\omega}\le C(t)r^2$. This implies that a maximum cannot occur on $E$. Using the fact that $V^{\rad}$ and $\frac{\partial}{\partial z^1}$ are holomorphic vector fields, we know that $(\partial_t - \Delta)Q_\varepsilon \le 0$. This gives that $Q_\varepsilon \le C$ for all $\varepsilon$ since we have bounds on $\partial D_1$. Letting $\varepsilon\rightarrow 0$ we have 
\begin{align}
|V^{\rad}|^2_\omega\left|\tfrac{\partial}{\partial z^1}\right|^2_\omega\le C .
\end{align}
Restricting to the complex line $\{0=z^1=\ldots=\widehat{z^i}=\ldots=z^n\}$, we arrive at
\begin{align}
g_{1\bar{1}}|z^1|^2 g_{1\bar{1}}\omega \le C.
\label{vr}
\end{align}
From this we obtain the desired estimate
$$g_{i\bar{i}}\le \frac{C}{|z^i|}.$$
From the above inequality, it is straightforward to see that if we assume that $x$ lies in a coordinate direction from the origin, $\gamma(\lambda)=\lambda x$ for $\lambda\in(0,1]$, then
$$g_{\gamma(\lambda)} (\gamma'(\lambda),\gamma'(\lambda)) \le \frac{C|x|}{\lambda}.$$ Integrating its square root over $\lambda\in[0,1)$, we arrive at the desired bound on radial paths emanating from a blow-up point.
\end{proof}

Combining Lemma \ref{bounds_on_spheres} and Lemma \ref{lengths_of_radial_paths}, we may conclude a diameter bound for the fiber $F$. Lemma \ref{base_control} automatically gives us bounds on the diameter of $B$ for each $f\in F$. Thus, this gives a uniform bound on the diameter of $X$ with respect to $\omega$ is for all $t\in [0,T)$.

\subsection{Convergence of the diameter of fibers}
We will now show that the diameter of the fiber tends to zero as $t\rightarrow T^{-}$ at a rate of $(T-t)^{1/5}$. We note that the proofs of the lemmas follows similarly to those in \cite{song-weinkove13} and \cite{fu-zhang17}, but we will include a proof for the convenience of the reader. The improvement in the exponent from the result of \cite{fu-zhang17} of $1/15$ to $1/5$ is the result of simply adjusting the powers of certain parameters in the proof accordingly. From now on, let us denote $d_t$ for $d_{\omega(t)}$ and $\diam_t$ for $\diam_{\omega(t)}$.

We will use the following method of \cite{song-weinkove13} (see also \cite{ssw13}):

\begin{lemma}
Given two points $p,q$ on $F$ that can be joined by a curve $\gamma \cong \mathbb{P}^1\subset F$, assume that $p,q$ belong to the same fixed coordinate chart $U$ whose image under the holomorphic coordinate $z=x+\sqrt{-1}y$ is a ball of radius 2 in $\mathbb{C}$ with respect to $\omega_{\Eucl}$. Define $$\mathcal{R}=\{(x,y)\in \mathbb{R}^2: x\in [0,x_0], y\in [-\varepsilon,\varepsilon]\}\subset \mathbb{R}^2\subset \mathbb{C}$$ where $\varepsilon=(T-t)^\alpha$ which we may assume is sufficiently small with $\alpha>0$. Assume that $p$ corresponds to $(0,0)$ and $q$ to $(x_0,0)$ with $x_0\in(0,1)$ in $\mathcal{R}$. Then for a specific $y'\in (-\varepsilon, \varepsilon)$, we have that for $p' = (0,y')$ and $q'=(x_0,y')$ that
\begin{align*}
d_t(p',q') \le C(T-t)^{\frac{1-\alpha}{2}}.
\end{align*}
\label{rectangle}
\end{lemma}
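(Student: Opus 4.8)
The plan is to run the Fubini--Cauchy-Schwarz averaging scheme of \cite{song-weinkove13} (cf.\ \cite{ssw13}); the only genuine geometric input is that the area of $\gamma$ with respect to $\omega(t)$ shrinks like $T-t$.

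\emph{Setup of the competitor paths.} For each $y'\in(-\varepsilon,\varepsilon)$ I would take the horizontal segment $\gamma_{y'}(x)=(x,y')$, $x\in[0,x_0]$, which lies inside $\mathcal R$ and joins $p'=(0,y')$ to $q'=(x_0,y')$. Since $x_0<1$ and $\varepsilon=(T-t)^\alpha$ may be assumed small, $\mathcal R$ is contained in the radius-$2$ coordinate ball, so each $\gamma_{y'}$ is a genuine path in $F$ and hence $d_t(p',q')\le L_t(\gamma_{y'})$, where $L_t$ is length with respect to $\omega(t)$. Writing the restriction of $\omega(t)$ to the curve $\gamma$ in the coordinate $z=x+\sqrt{-1}y$ as $\omega(t)|_\gamma=\sqrt{-1}\,g_{z\bar z}\,dz\wedge d\bar z=2g_{z\bar z}\,dx\wedge dy$ (so $g_{\mathbb R}(\partial_x,\partial_x)=2g_{z\bar z}$, as in Section 3), one has $L_t(\gamma_{y'})=\int_0^{x_0}\sqrt{2g_{z\bar z}(x,y')}\,dx$.

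\emph{Averaging and the area bound.} Integrating over $y'$ and applying Fubini and Cauchy--Schwarz on $\mathcal R$ with respect to Lebesgue measure,
\begin{align*}
\int_{-\varepsilon}^{\varepsilon}L_t(\gamma_{y'})\,dy' = \int_{\mathcal R}\sqrt{2g_{z\bar z}}\,dx\,dy \le \Big(\int_{\mathcal R}2g_{z\bar z}\,dx\,dy\Big)^{1/2}(2\varepsilon x_0)^{1/2} \le \big([\omega(t)]\cdot[\gamma]\big)^{1/2}(2\varepsilon)^{1/2},
\end{align*}
where the last step uses $x_0<1$ together with $\int_{\mathcal R}2g_{z\bar z}\,dx\,dy\le\int_\gamma\omega(t)|_\gamma=[\omega(t)]\cdot[\gamma]$, which holds because $\omega(t)|_\gamma$ is a positive $(1,1)$-form on the compact curve $\gamma\cong\mathbb{P}^1$ and $\mathcal R$ sits inside it. The point of substance is then the cohomological computation: because $\gamma$ lies in one fiber $\{b\}\times F$, the map $\pr_B\circ\iota_\gamma$ is constant, so $[\pr^*_B\omega_B]\cdot[\gamma]=0$; combined with the hypothesis $[\omega_0]-Tc_1(X)=[\pr^*_B\omega_B]$, which gives $[\omega(t)]=[\omega_0]-tc_1(X)=[\pr^*_B\omega_B]+(T-t)c_1(X)$, we obtain
\begin{align*}
[\omega(t)]\cdot[\gamma] = (T-t)\,c_1(X)\cdot[\gamma] = \tfrac{T-t}{T}\,[\omega_0]\cdot[\gamma] \le C(T-t),
\end{align*}
with $C$ uniform over the finitely many rational curve classes that can occur as $[\gamma]$.

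\emph{Conclusion.} Plugging $\varepsilon=(T-t)^\alpha$ into the averaged bound yields $\int_{-\varepsilon}^{\varepsilon}L_t(\gamma_{y'})\,dy'\le C(T-t)^{(1+\alpha)/2}$, so the average of $L_t(\gamma_{y'})$ over $y'\in(-\varepsilon,\varepsilon)$ is at most $\tfrac{1}{2\varepsilon}C(T-t)^{(1+\alpha)/2}=\tfrac{C}{2}(T-t)^{(1-\alpha)/2}$. By pigeonhole there is a $y'$ with $L_t(\gamma_{y'})$ below this average, and for that $y'$, $d_t(p',q')\le L_t(\gamma_{y'})\le C(T-t)^{(1-\alpha)/2}$. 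The only non-formal ingredient is the identification of $\int_{\mathcal R}2g_{z\bar z}$ with a portion of the total $\omega(t)$-area of $\gamma$ and the observation that this area is $O(T-t)$ because the base factor $[\pr^*_B\omega_B]$ of the Kähler class pairs trivially with a curve contained in a fiber; the rest (Fubini, Cauchy--Schwarz, the pigeonhole step, and checking $\mathcal R$ fits in the coordinate ball via $x_0<1$ and $\varepsilon$ small) is routine.
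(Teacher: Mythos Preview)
Your proposal is correct and follows essentially the same Fubini--Cauchy--Schwarz averaging scheme as the paper. The only cosmetic difference is the order of operations: the paper first applies pigeonhole to find a $y'$ with $\int_0^{x_0}\tr_{\tilde\omega}\omega\,dx\le C(T-t)^{1-\alpha}$ and then uses Cauchy--Schwarz in one variable to pass to the length, whereas you apply Cauchy--Schwarz on the rectangle $\mathcal R$ first and then pigeonhole on the lengths $L_t(\gamma_{y'})$; both routes use the same area bound $\int_\gamma\omega(t)\le C(T-t)$ and yield the same exponent.
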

\begin{proof}
Along this $\gamma$, we have $$\int_\gamma \omega(t) \le \int_\gamma \frac{1}{T}((T-t)\omega_0 + t\pr_B^*\omega_B) \le C(T-t).$$ Then it follows that $$\int_{-\varepsilon}^\varepsilon \int_0^{x_0}\tr_{\tilde{\omega}}\omega \ dx dy\le C(T-t)$$ which implies that there exists $y'\in(-\varepsilon,\varepsilon)$ such that $$\int_0^{x_0} \tr_{\tilde{\omega}}\omega(t) \ dx \le \frac{C(T-t)}{\varepsilon}=C(T-t)^{1-\alpha}$$ since $\varepsilon = (T-t)^\alpha.$ For $p'\in(0,y')$, $q'\in(x_0,y')$, we have that 
\begin{align*}
d_t(p',q') &\le \int_0^{x_0} \sqrt{g_t(\partial_x,\partial_x)}(x,y')dx\\
&= \int_0^{x_0}\sqrt{\tr_{\tilde{\omega}}\omega(t)}\sqrt{\widehat{g_0}(\partial_x,\partial_x)}(x,y')dx\\
&\le \Big(\int_0^{x_0}\tr_{\tilde{\omega}}\omega(t) \ dx \Big)^{1/2}\Big(\int_0^{x_0}\widehat{g_0}(\partial_x,\partial_x)(x,y')dx \Big)^{1/2}\\
&\le C(T-t)^{\frac{1-\alpha}{2}},
\end{align*}
as desired.
\end{proof}

\begin{lemma}
Given two points $p,q\in E\subset F$, we have that $$d_t(p,q)\le C(T-t)^{1/3}.$$
\end{lemma}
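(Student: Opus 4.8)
The plan is to reduce, using Lemma \ref{rectangle}, to bounding the $\omega(t)$-distance between two points of $E$ that are close in a fixed coordinate system, and then to control such distances by detouring off $E$ onto small spheres, where Lemma \ref{bounds_on_spheres} provides a usable estimate. Since $E = \pi^{-1}(p) \cong \mathbb{P}^{m-1}$ with $m \ge 2$, any two points $p, q \in E$ lie on a common projective line $\gamma \cong \mathbb{P}^1 \subset E$; as the lines in $\mathbb{P}^{m-1}$ form a compact family, we may cover $\gamma$ by a uniformly bounded number of coordinate charts of the kind appearing in Lemma \ref{rectangle} and choose points $p = p_0, p_1, \dots, p_N = q$ along $\gamma$, with $N$ bounded, so that consecutive points lie in a common such chart at Euclidean distance less than $1$. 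It therefore suffices to bound $d_t(p,q)$ when $p$ and $q$ correspond to $(0,0)$ and $(x_0, 0)$ in one such chart with $x_0 \in (0,1)$, all of $\gamma$ lying inside $E$.

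Fix $\alpha \in (0,1)$ to be optimized, set $\varepsilon = (T-t)^\alpha$, and apply Lemma \ref{rectangle}: this yields $y' \in (-\varepsilon, \varepsilon)$ and points $p' = (0, y')$ and $q' = (x_0, y')$, both lying on $\gamma \subset E$, with $d_t(p', q') \le C(T-t)^{(1-\alpha)/2}$. What remains is to bound $d_t(p, p')$ and $d_t(q, q')$, each the $\omega(t)$-distance between two points of $E$ that are within Euclidean distance $\varepsilon$ of one another. Since we have no control over $\omega(t)$ along $E$ itself, I would move \emph{off} of $E$: view the two points as directions $[u_0], [u']$ through the blow-up point with $u_0, u'$ unit vectors chosen so that $|u_0 - u'| \le C\varepsilon$ (possible because the two points of $E$ are $C\varepsilon$-apart and the relevant local sections and inclusions are uniformly Lipschitz). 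For each small $\delta > 0$, concatenate the radial path from $[u_0] \in E$ out to $\delta u_0$, then the lift to $F$ of a geodesic arc on the sphere of Euclidean radius $\delta$ joining $\delta u_0$ to $\delta u'$, then the radial path from $\delta u'$ back to $[u'] \in E$.

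By Lemma \ref{lengths_of_radial_paths}, each radial segment has $\omega(t)$-length at most $C\delta^{1/2}$. On the sphere of radius $\delta$, Lemma \ref{bounds_on_spheres} together with the comparability of $\pi^*\omega_{\mathbb{P}^m}$ with $\omega_{\Eucl}$ near the blow-up point gives $\omega(t)|_F \le C\delta^{-2}\omega_{\Eucl}$, so the middle arc, which has Euclidean length at most $C\delta\varepsilon$, has $\omega(t)$-length at most $C\varepsilon$ (the arc lies in a single fiber, so $\omega(t)$-length equals $\omega(t)|_F$-length there). Hence $d_t(p, p') \le 2C\delta^{1/2} + C\varepsilon$ for every $\delta > 0$; letting $\delta \to 0$ gives $d_t(p, p') \le C\varepsilon = C(T-t)^\alpha$, and similarly $d_t(q, q') \le C(T-t)^\alpha$. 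Combining with the rectangle estimate, $d_t(p,q) \le d_t(p,p') + d_t(p', q') + d_t(q', q) \le C\left( (T-t)^\alpha + (T-t)^{(1-\alpha)/2} \right)$, and the choice $\alpha = 1/3$ balances the two exponents and yields $d_t(p,q) \le C(T-t)^{1/3}$, the constant remaining uniform after the bounded number of subdivisions.

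The main obstacle is the estimate $d_t(p, p') \le C(T-t)^\alpha$ for points of $E$: the key point is that even though the metric on $E$ is uncontrolled, one can escape onto a sphere of radius $\delta$ where Lemma \ref{bounds_on_spheres} applies, traverse a short arc, and come back, with the cost $O(\delta^{1/2})$ of the detour vanishing as $\delta \to 0$. Beyond this, one should check that the radial paths of Lemma \ref{lengths_of_radial_paths} indeed connect arbitrary points of $E$ — not merely the image point downstairs — to points of the relevant sphere, that all of these paths remain inside the region $D^0_{r_0}$ on which the estimates hold, and that every constant is independent of $t$.
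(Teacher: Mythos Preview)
Your argument is correct and matches the paper's proof in all essentials: apply Lemma~\ref{rectangle} on a line in $E$, then control $d_t(p,p')$ by detouring along radial paths to a sphere $S_\delta$ (Lemma~\ref{lengths_of_radial_paths}), across an arc of Euclidean length $O(\delta\varepsilon)$ where Lemma~\ref{bounds_on_spheres} gives $\omega|_F \le C\delta^{-2}\omega_{\Eucl}$, and back. The only cosmetic differences are that the paper fixes $\alpha=1/3$ at the outset and chooses a specific small radius $r$ with $r^{1/2}\le \varepsilon$, whereas you leave $\alpha$ free until the end and send $\delta\to 0$; both yield the same bound.
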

\begin{proof}
We first apply the previous Lemma with $\alpha=1/3$ , i.e. $\varepsilon = (T-t)^{1/3}$, to obtain that on the rectangle $\mathcal{R}$ as specified in the Lemma, we have that $d_t(p',q')\le C(T-t)^{1/3}$. Now, it remains to show how to bound $d_t(p,p')$ and $d_t(q,q')$. The points $p$ and $p'$ correspond to lines $L_{p}$ and $L_{p'}$ in $\mathbb{P}^1$ which when restricted $D_r\subset\mathbb{C}^2$ become paths emanating from the origin to points $\tilde{p}$ and $\tilde{p}'$ that are transverse to $E$. Now, by the bound from Lemma \ref{lengths_of_radial_paths} this gives us that $d_t(p,\tilde{p})+d_t(p',\tilde{p}')\le Cr^{1/2}\le C\varepsilon$ if we choose $r$ sufficiently small. Finally, since $p$ and $p'$ are $\varepsilon$ apart on $\mathcal{R}$ which implies that that $\tilde{p}$ and $\tilde{p}'$ are $r\varepsilon$ apart on $S_r$ with respect to the Euclidean metric, it follows that for $\iota_r:S_r\rightarrow D_r$, we have $$d_t(\tilde{p},\tilde{p}')\le \frac{\sqrt{C}}{r}d_{\iota^*_r\omega_{\Eucl}}(\tilde{p},\tilde{p}')\le C\varepsilon=C(T-t)^{1/3}.$$ Applying the same argument for $q$ and $q'$ and applying the triangle inequality, we arrive at the desired bound.
\end{proof}

\begin{lemma}
There exists a uniform constant $C$ such that for any fixed $b_0\in B$, $\delta_0\in(0,1/2)$ and $t\in [0,T)$, we have
\begin{align*}
\diam_t(\{b_0\}\times \pi^{-1}(D_{\delta_0}))\le C(|\delta_0|^{1/2}+(T-t)^{1/3}).
\end{align*}
\end{lemma}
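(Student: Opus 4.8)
The plan is to cover the set $\{b_0\}\times \pi^{-1}(D_{\delta_0})$ by three kinds of paths whose lengths we already control, and then estimate the diameter by concatenating at most a bounded number of such paths. First I would reduce to two points $p,q$ in $\{b_0\}\times\pi^{-1}(D_{\delta_0})$. Pushing both points radially outward along the fiber to the boundary sphere $S_{\delta_0}$ costs, by Lemma \ref{lengths_of_radial_paths}, at most $C|\delta_0|^{1/2}$ each (note the radial paths there emanate \emph{from} the blow-up point, so we traverse them in reverse); this handles the case where $p$ or $q$ lies on or near $E$ by the limiting-sequence trick used in the proof of the diameter bound theorem of Section 3. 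So it suffices to bound $d_t$ between two points $\tilde p,\tilde q$ lying on (or within $D_{\delta_0}$ near) the sphere $S_{\delta_0}$.

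Next, I would bound $d_t(\tilde p,\tilde q)$ for $\tilde p,\tilde q\in S_{\delta_0}$. Here I use the bound $\omega|_F\le (C/r^2)(\pr_F^*\pi^*\omega_{\mathbb{P}^m})|_F$ from Lemma \ref{bounds_on_spheres}: on $D_{\delta_0}\setminus D_{\delta_0/2}$ we have $\omega\le (C/\delta_0^2)\omega_{\Eucl}$, so the inclusion $\iota_{\delta_0}:S_{\delta_0}\to D_{\delta_0}$ satisfies $d_{\iota_{\delta_0}^*\omega(t)}(\tilde p,\tilde q)\le (\sqrt C/\delta_0)\,d_{\iota_{\delta_0}^*\omega_{\Eucl}}(\tilde p,\tilde q)\le C$ — but that only gives a bound $O(1)$, which is too weak to get the $(T-t)^{1/3}$ term. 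To get the sharp estimate I would instead combine two moves: when $\tilde p$ and $\tilde q$ project to points of $\mathbb{P}^{m-1}$ that are far apart in the Fubini–Study sense, they are joined by a line $\mathbb{P}^1\subset F$ and I invoke the previous lemma (two points of $E$ are within $C(T-t)^{1/3}$, then push back out radially, losing another $C|\delta_0|^{1/2}$); when they are close in $\mathbb{P}^{m-1}$, the sphere-diameter estimate applied on an annulus at a fixed small scale gives a bound proportional to the Fubini–Study distance, which can be made $\le C(T-t)^{1/3}$. Concatenating, $d_t(\tilde p,\tilde q)\le C(|\delta_0|^{1/2}+(T-t)^{1/3})$, and adding the two radial legs yields the claim.

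A cleaner route, and the one I would actually write: apply the previous lemma directly. Given $p,q\in\{b_0\}\times\pi^{-1}(D_{\delta_0})$, radial-project each onto $E$ at cost $\le C|\delta_0|^{1/2}$ (using Lemma \ref{lengths_of_radial_paths} in reverse, together with the boundedness of $\diam_t S_r$ from Lemma \ref{sphere_diameter_bound}/the argument in Lemma \ref{bounds_on_spheres} to move between radial rays at the scale $r\sim\delta_0$, which costs $O(1)\cdot$(angular separation)$\le C|\delta_0|^{1/2}$ after first shrinking $r$), landing at points $p',q'\in E$; then $d_t(p',q')\le C(T-t)^{1/3}$ by the previous lemma. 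The triangle inequality gives $d_t(p,q)\le C(|\delta_0|^{1/2}+(T-t)^{1/3})$, and taking the sup over $p,q$ proves the lemma. The one subtlety to spell out is the "move between radial rays" step: two points of $S_{\delta_0}$ with Euclidean separation $\le C\delta_0$ are joined, via $\omega\le(C/\delta_0^2)\omega_{\Eucl}$ on the annulus, by a path of $\omega(t)$-length $\le C$; to upgrade this $O(1)$ bound to $O(|\delta_0|^{1/2})$ one instead moves at a \emph{smaller} radius $r=|\delta_0|^{1/2}$ — shrink both points radially inward from $S_{\delta_0}$ to $S_r$ (cost $\le C|\delta_0|^{1/2}$ by Lemma \ref{lengths_of_radial_paths}), walk along $S_r$ (cost $\le C$ by the sphere-diameter bound, or better $\le C\cdot(\text{angular sep})$), which is already absorbed.

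The main obstacle is bookkeeping the interplay between the two small parameters $\delta_0$ and $T-t$: one must make sure the radial excursions contribute only $|\delta_0|^{1/2}$ (not something worse like $\delta_0^{1/2}$ times a power of $(T-t)^{-1}$) and that the $E$-to-$E$ estimate of the previous lemma, whose proof itself chose $r$ "sufficiently small" depending on $T-t$, is compatible with $r\sim\delta_0$ fixed — i.e. one should take $r=\min(\delta_0,(T-t)^{2/3})$ or similar in the previous lemma and check both terms come out as claimed. Everything else is a direct concatenation of Lemmas \ref{sphere_diameter_bound}, \ref{bounds_on_spheres}, \ref{lengths_of_radial_paths} and the two preceding lemmas of this subsection, plus the triangle inequality.
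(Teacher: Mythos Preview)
Your ``cleaner route'' is the paper's proof: take any $p,q\in\{b_0\}\times\pi^{-1}(D_{\delta_0})$, follow the radial path from each down to its limit point $p',q'\in E$ (cost $\le C|\delta_0|^{1/2}$ by Lemma \ref{lengths_of_radial_paths}, since $|x|\le\delta_0$), then apply the previous lemma to get $d_t(p',q')\le C(T-t)^{1/3}$; the triangle inequality finishes it.

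However, you are overcomplicating and second-guessing yourself. The ``move between radial rays'' step, the sphere-diameter excursions, and the upgrade from $O(1)$ to $O(|\delta_0|^{1/2})$ are all unnecessary: each radial path from $p$ (resp.\ $q$) lands at a \emph{specific} point $p'\in E$ (resp.\ $q'\in E$), and the previous lemma bounds $d_t(p',q')$ for \emph{any} two points of $E$. There is no need to align rays or walk along spheres. Your worry about the ``interplay of small parameters'' is also misplaced: the previous lemma is a self-contained statement with a uniform constant, and the choice of $r$ in its proof is internal to that argument and places no constraint here. Strip out the first paragraph and the parenthetical detours and you have exactly the three-line proof in the paper.
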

\begin{proof}
We may assume that at least one of $p,q\in \pi^{-1}(D_{\delta_0})$ is not in $E$ since otherwise it follows by the previous lemma. For $p\in E$ and $q\in \pi^{-1}(D_{\delta_0}\backslash \{0\})$, let $\gamma(\lambda)=\lambda q$ for $\lambda\in [0,1]$ be a radial path in $\pi^{-1}(D_{\delta_0})$ from $q$ to $E$. Let $q' = \lim_{\lambda\rightarrow 0^+}\gamma(\lambda)$. Then, by Lemma \ref{lengths_of_radial_paths}, $d_t(q,q')\le C|\delta_0|^{1/2}$ and by the previous lemma $d_t(p,q')\le C(T-t)^{1/3}$.
\end{proof}

\begin{lemma}
There exists a uniform constant $C$ such that for any fixed $b_0\in B$ and any $p,q\in F_{b_0}$ and $t\in [0,T)$, we have
\begin{align*}
\diam_t(F_{b_0})\le C(T-t)^{1/5}.
\end{align*}\label{fiber_convergence}
\end{lemma}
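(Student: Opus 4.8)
The plan is to optimize the two competing error terms from the previous lemma by choosing the radius $\delta_0$ appropriately in terms of $(T-t)$. From the preceding lemma, for any fixed $b_0 \in B$ and $\delta_0 \in (0,1/2)$ we have
\begin{align*}
\diam_t(\{b_0\}\times \pi^{-1}(D_{\delta_0}))\le C(\delta_0^{1/2}+(T-t)^{1/3}).
\end{align*}
On the other hand, the region $F_{b_0}\setminus \pi^{-1}(D_{\delta_0})$ sits inside the union of the charts $D^k_{r_0}$, $1\le k\le m$, together with the part of $D^0_{r_0}$ with $r\ge\delta_0$, and on that region the estimates from Lemma \ref{bounds_on_spheres} give $\omega|_F\le (C/\delta_0^2)(\pr_F^*\pi^*\omega_{\mathbb{P}^m})|_F$. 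Since $\pi^*\omega_{\mathbb{P}^m}$ is a fixed metric, the $\omega(t)$-diameter of this region is at most $(C/\delta_0)$ times its fixed $\pi^*\omega_{\mathbb{P}^m}$-diameter, hence bounded by $C/\delta_0$. But this is too crude on its own, so I would instead bound the distance across $F_{b_0}\setminus\pi^{-1}(D_{\delta_0})$ more carefully using the volume/Schwarz-type argument of Lemma \ref{rectangle}: along a fixed $\mathbb{P}^1\subset F$ one has $\int_\gamma\omega(t)\le C(T-t)$, which combined with the upper bound $\omega|_F\le (C/\delta_0^2)\pi^*\omega_{\mathbb{P}^m}$ on the relevant rectangle $\mathcal{R}$ of width $\varepsilon$ lets one extract a good slice and estimate $d_t$ across it by $C(T-t)^{1/2}/(\delta_0\,\varepsilon^{1/2})$ — the point is that the $1/\delta_0^2$ loss is mild compared with the $(T-t)$ gain.

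Concretely, I would combine three pieces: (i) $\diam_t$ of the core $\pi^{-1}(D_{\delta_0})$ is $\le C\delta_0^{1/2}+C(T-t)^{1/3}$; (ii) any point of $F_{b_0}$ outside the core can be joined to the core by a radial path of $\omega(t)$-length $\le C\delta_0^{1/2}$ (Lemma \ref{lengths_of_radial_paths}); and (iii) moving between the charts $D^k_{r_0}$ along spheres of fixed radius $r_0$ away from $E$ costs only a uniformly bounded amount — but one needs this last bit sharpened: since any two points away from $E$ at scale $r_0$ lie on a common $\mathbb{P}^1$, applying Lemma \ref{rectangle} there with parameter $\alpha$ gives a cost $C(T-t)^{(1-\alpha)/2}$ for crossing a rectangle of height $(T-t)^\alpha$, while the two short ``cap'' segments transverse to $E$ cost $C(T-t)^{\alpha/2}$ by Lemma \ref{lengths_of_radial_paths} as in the previous lemma's proof, plus $C(T-t)^{\alpha/2}$ for the sphere walk. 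Balancing $(1-\alpha)/2$ against $\alpha/2$ would give exponent $1/4$ at best this way, so to get $1/5$ one must instead balance all four contributions: $\delta_0^{1/2}$, $(T-t)^{1/3}$ from the core, and the chart-crossing terms, choosing $\delta_0 = (T-t)^{2/5}$ so that $\delta_0^{1/2}=(T-t)^{1/5}$, and simultaneously choosing the rectangle parameter $\alpha$ in Lemma \ref{rectangle} together with the $1/\delta_0^2$-weighted volume estimate so that the chart-crossing cost is also $O((T-t)^{1/5})$. The arithmetic: with $\omega|_F\le(C/\delta_0^2)\pi^*\omega_{\mathbb{P}^m}$ on the annular region and $\int_\gamma\omega(t)\le C(T-t)$, the slice argument gives a crossing cost of order $(T-t)^{1/2}\delta_0^{-1}\varepsilon^{-1/2}$; with $\delta_0=(T-t)^{2/5}$ and $\varepsilon=(T-t)^{1/5}$ this is $(T-t)^{1/2-2/5-1/10}=(T-t)^0$, which is still too weak, so the correct route is to iterate the core estimate rather than cross the annulus directly — i.e. reduce the general case to two points in $\pi^{-1}(D_{\delta_0})$ by first sliding each point radially into the core at cost $C\delta_0^{1/2}=C(T-t)^{1/5}$, and for points in different charts, route through $E$ using the $(T-t)^{1/3}\le (T-t)^{1/5}$ bound on $\diam_t E$. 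Then $\diam_t(F_{b_0})\le C\delta_0^{1/2}+C(T-t)^{1/3}+C\delta_0^{1/2} \le C(T-t)^{1/5}$ upon setting $\delta_0=(T-t)^{2/5}$ (valid once $(T-t)$ is small; for $t$ bounded away from $T$ the claim is the diameter bound already established).

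The main obstacle is item (iii): one must be sure that any two points of $F_{b_0}$ lying off $E$ can be connected within $F_{b_0}$ by a path that stays in the union of a bounded number of the standard charts and whose excursions near $E$ are controlled — equivalently, that the ``sphere walk + radial caps'' scheme of the previous lemma genuinely covers all pairs, including pairs in charts $U_k$, $U_{k'}$ with $k,k'\ge 1$ that do not contain the blow-up point. Here the key structural fact is that $F=\Bl_p\mathbb{P}^m$ is covered by $D^0_{r_0},\dots,D^m_{r_0}$ with $r_0$ fixed, and on the overlaps (which are away from $E$) we have the \emph{uniform} bound $\omega\le C\omega_0$ from Lemma \ref{bounds_on_spheres} plus $\omega^n\le C\omega_0^n$ from Lemma \ref{volume_bound}; thus the transition between charts contributes only a fixed constant $C$ to $d_t$, which is $\le C(T-t)^{1/5}$ only if we absorb it — but it is \emph{not} small, so in fact the honest statement is that the chart-transition region must also be crossed via a $\mathbb{P}^1$ and Lemma \ref{rectangle}, and the real content of the lemma is verifying that the resulting four exponents $\{1/2\cdot(2/5),\,1/3,\,(1-\alpha)/2,\,\alpha/2\}$ can all be made $\ge 1/5$, which forces $\alpha\in[2/5,3/5]$ and $\delta_0=(T-t)^{2/5}$. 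Once the covering/connectivity bookkeeping is set up, the remaining estimates are exactly Lemmas \ref{rectangle}, \ref{lengths_of_radial_paths}, and the preceding two lemmas applied with these parameter choices, together with the triangle inequality.
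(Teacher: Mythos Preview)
Your proposal circles around the right parameter choice $\delta_0=(T-t)^{2/5}$ but never pins down the correct mechanism for the ``cap'' step, and the route you eventually settle on contains a genuine error. When $p$ lies outside $\pi^{-1}(D_{\delta_0})$, say at Euclidean distance $r_1\in[\delta_0,r_0]$ from the blow-up point, Lemma~\ref{lengths_of_radial_paths} bounds the length of the radial segment from $p$ toward $E$ by $Cr_1^{1/2}$, \emph{not} by $C\delta_0^{1/2}$; indeed the integral $\int\sqrt{C|x|/\lambda}\,d\lambda$ over $\lambda\in[\delta_0/r_1,1]$ is of order $r_1^{1/2}$. Hence ``sliding each point radially into the core at cost $C\delta_0^{1/2}$'' is false: that move costs $O(1)$ and produces no decay in $(T-t)$. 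Your alternative cap estimate $(T-t)^{\alpha/2}$ ``by Lemma~\ref{lengths_of_radial_paths} as in the previous lemma'' is equally misplaced: there $p,p'$ lay on $E$ and the caps were short radial excursions near $E$, whereas here $p,p'$ sit in $F\setminus\pi^{-1}(D_{\delta_0})$, far from $E$, and that lemma says nothing about the segment between them.

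The paper handles $p,q\in F\setminus\pi^{-1}(D_{\delta_0})$ as follows: apply Lemma~\ref{rectangle} with $\varepsilon=(T-t)^{3/5}$ to get $d_t(p',q')\le C(T-t)^{(1-3/5)/2}=C(T-t)^{1/5}$, and bound the caps $d_t(p,p')$, $d_t(q,q')$ \emph{directly} from the metric inequality $\omega|_F\le (C/\delta_0^{2})\,\pi^*\omega_{\mathbb{P}^m}$ of Lemma~\ref{bounds_on_spheres}, valid on all of $\{r\ge\delta_0\}$. Since $p$ and $p'$ are $\varepsilon$-close with respect to the fixed metric, this yields $d_t(p,p')\le (C/\delta_0)\,\varepsilon=C(T-t)^{3/5-2/5}=C(T-t)^{1/5}$. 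Thus the three exponents to balance are $\tfrac{1}{2}\log_{(T-t)}\delta_0$, $(1-\alpha)/2$, and $\alpha-\log_{(T-t)}\delta_0$ (not $\alpha/2$), and the choice $\delta_0=(T-t)^{2/5}$, $\alpha=3/5$ equates all three at $1/5$; the remaining core term $(T-t)^{1/3}$ is already $\le(T-t)^{1/5}$. You had every ingredient on the page but combined them with the wrong cap exponent, which is why your arithmetic kept giving $1/4$ or $0$ instead of $1/5$.
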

\begin{proof}
Using the previous lemma, we see that for $\delta_0 = (T-t)^{2/5}$, we have that $$\diam_t(\{b_0\}\times\pi^{-1}(D_{\delta_0}))\le C(T-t)^{1/5}.$$ Now, it remains to consider the case when $p,q\in F\backslash\pi^{-1}(D_{\delta_0})$. Applying Lemma \ref{rectangle} to $p,q$ with $\varepsilon = (T-t)^{3/5}$, we have that $d_t(p',q')\le C(T-t)^{1/5}$. We also have by Lemma \ref{bounds_on_spheres} that $$\omega|_F \le \frac{C}{\delta_0^2}\pi^*\omega_{FS}\le \frac{C}{\delta_0^2}\pi^*\omega_{\Eucl},$$ which gives us that $d_t(p,p')\le \frac{\sqrt{C}}{\delta_0}\varepsilon \le C(T-t)^{1/5}$ and similary for $d_t(q,q')$.
\end{proof}

\subsection{Gromov-Hausdorff convergence}
We will now show the Gromov-Hausdorff convergence of $(X,\omega_{t_n})$ to $(B,\omega_{B,\infty})$ for a subsequence $\{t_n\}_{n=1}^\infty\rightarrow T^-$ and a metric $\omega_{B,\infty}$ that is uniformly equivalent to $\omega_B$. The proof was shown by Fu-Zhang \cite{fu-zhang17}, but we include a proof here for the convenience of the reader.

Firstly, for any $x,y\in X$, where we denote $d_{\omega_t}$ by $d_t$ and $\diam_{\omega(t)}$ by $\diam_t$ for simplicity, we have that 
\begin{align}\begin{split}
d_t(x,y)&\le \diam_t F_{\pr_B(x)} + \sqrt{C}d_0(\pr_B(x),\pr_B(y))+\diam_t F_{\pr_B(y)}\\
&\le C(T-t)^{1/5}+ \sqrt{C}d_B(\pr_B(x),\pr_B(y)).\label{triangle}
\end{split}\end{align}
In particular, there exists a uniform $C$ such that $d_t(x,y)\le C$ for any $t<T$ and for all $x,y\in X$.

Now, let $M=X\times X$. For each $k\in\mathbb{N}$, let $Q_k$ be the finite collection of centers of balls of radius $\frac{1}{k}$, measured with respect to $\pr_1^*\omega_0 + \pr_2^*\omega_0$, that covers $M$ by compactness, where $\pr_1$ and $\pr_2$ are projections onto the first and second components of $M$, respectively. We note that $Q$ is countable and dense in $X$. Take a sequence $(d_{t_n})_{n=1}^\infty$ of $d_t$ with $t_n\rightarrow T$ as $n\rightarrow\infty$. For a point $q_1\in Q$, the sequence $(d_{t_n}(q_1))_{n=1}^\infty$ is bounded and so we can find a subsequence $(t_{1,n})$ such that $d_{t_{1,n}}(q_1)$ converges in $\mathbb{R}$. We can then find a subsequence $(t_{2,n})$  of $(t_{1,n})$ such that $d_{t_{2,n}}(q_2)$ converges. Using the fact that $Q$ is countable, we proceed in this way and construct a subsequence that converges at each $q\in Q$ by taking the diagonal sequence $(d_{t_{n}})=(d_{t_{n,n}})$. We are left to show that this subsequence is uniformly Cauchy. 

Given $\varepsilon>0$, let $\delta = \frac{\varepsilon}{12C}$, $k>\frac{1}{\delta}$ and choose $T_\varepsilon$ such that $(T-T_\varepsilon)^{1/5}\le \frac{\varepsilon}{12C}$. Then for any $q=(x,y)\in X$, we have that
\begin{align*}
|d_{t_m}(q)-d_{t_n}(q)|&\le |d_{t_m}(q)-d_{t_m}(q_k)| +|d_{t_n}(q_k)-d_{t_m}(q_k)|+|d_{t_n}(q_k)-d_{t_n}(q)|
\end{align*}
where  $q_k=(x_k, y_k)\in Q_k$.
The first term can be bounded as
\begin{align*}
|d_{t_m}(q)-d_{t_m}(q_k)|&\le |d_{t_m}(x,y)-d_{t_m}(x,y_k)|+|d_{t_m}(x,y_k)-d_{t_m}(x_k, y_k)|\\
&\le d_{t_m}(y,y_k)+d_{t_m}(x,x_k)\\
&\le \frac{\varepsilon}{6}+\frac{\varepsilon}{6}=\frac{\varepsilon}{3}
\end{align*}
using Equation \eqref{triangle} and the fact that $d_0(\pr_B(x),\pr_B(x_k))\le \delta$ and $d_0(\pr_B(y),\pr_B(y_k))\le \delta$ since we chose $k>\frac{1}{\delta}$. The same holds for the third term by replacing $m$ with 
$n$. Since there are only a finite number of points in $Q_k$, choose $N$ large enough such that for $n,m>N$, 
\begin{align*}
|d_{t_n}(q_k)-d_{t_m}(q_k)|\le \frac{\varepsilon}{3}.
\end{align*}
Taking $N$ to be large enough such that $T_N\ge T_\varepsilon$, we see that $d_{t_n}$ is uniformly Cauchy and, thus, uniformly convergent. Let $d_\infty$ be the limit of this subsequence. It is straightforward to see that $d_\infty$ is continuous, non-negative symmetric and satisfies the triangle inequality. 

We have by Lemma \ref{metric_lower_bound} that $$d_\infty(x,y)\ge \sqrt{c}d_B(\pr_B(x,\pr_B(y)).$$

We also have an upper bound $$d_\infty(x,y)\le \sqrt{C}d_B(\pr_B(x),\pr_B(y)),$$ following from Equation \eqref{triangle}. Define $d_{B,\infty}(x,y) = d_\infty(x',y')$, for $x'\in F_x$ and $y'\in F_y$ which is independent of the choice of lift since for another lift $y''$ of $y$, we have
\begin{align*}
d_\infty(x',y')&\le d_\infty(x',y'')+d_\infty(y'',y')=d_\infty(x',y'')\\
d_\infty(x',y'')&\le d_\infty(x',y')+d_\infty(y',y'')=d_\infty(x',y'),
\end{align*}
which gives that $d_\infty(x',y')=d_\infty(x',y'')$. By symmetry, the same holds for lifts of $x$. Using the characterization of Gromov-Hausdorff convergence from \cite{fukaya}, the Gromov-Hausdorff distance $d_{\GH}(X,Y)$ between two metric spaces $(X,d_X)$ and $(Y,d_Y)$ is the infimum of all $\varepsilon>0$ such that the following holds. These exist, not necessarily continuous, maps $F:X\rightarrow Y$ and $G:Y\rightarrow X$ such that
\begin{align*}
|d_X(x_1)-d_Y(F(x_1),F(x_2))|&<\varepsilon \ \text{for all } x_1,x_2\in X,\\
d_X(x,G\circ F(x))&<\varepsilon \ \text{for all } x\in X,
\end{align*}
and similiarly for $Y$. Define $F:X\rightarrow B$ to be the projection map $F=\pr_B$, and let $G:B\rightarrow X$ be any map satisfying $F\circ G(b)=b$ for all $b\in B$. Then we have for any $x_1,x_2,x\in X$ that
\begin{align*}
|d_{t_n}(x_1,x_2)-d_{B,\infty}(F(x_1),F(x_2))| &= |d_{t_n}(x_1,x_2)-d_\infty(x_1,x_2)|\rightarrow 0,\\
d_{t_n}(x,G\circ F(x))&\le \diam_{t_n}F_{\pr_B(x)}\le C(T-t)^{1/5}\rightarrow 0,
\end{align*}
since $d_{t_n}\rightarrow d_\infty$ uniformly and because of Lemma \ref{fiber_convergence}. For all $b_1,b_2,b\in B$ we have 
\begin{align*}
|d_{B,\infty}(b_1,b_2)-d_{t_n}(G(b_1),G(b_2))|&=|d_\infty(G(b_1),G(b_2))|\rightarrow 0,
d_{B,\infty}(b,F\circ G(b))&=0,
\end{align*}
again by the fact that $d_{t_n}\rightarrow d_\infty$ uniformly. Thus, we have shown that as $n\rightarrow \infty$, we have that $d_{\GH}(X,d_{t_n}),(B,d_{B,\infty}))\rightarrow 0$, as desired.
\qed

\section{Generalizing to the fiber bundle case}
In this subsection, we will describe the proof of Theorem \ref{mainthm} by adapting the estimates we have obtained in the last subsection to the case of fiber bundles which can be locally trivialized over Zariski open sets. Let us define a fiber bundle to be a quadruplet $(X,B,F,\rho)$ where $B$ is the base compact manifold, $F$ is the fiber of dimension $m$, $\rho$ is the projection map from $X\rightarrow B$ such that for any $y\in B$, $\rho^{-1}(y) = F$ and on a Zariski open subset $(y\in) U\subset B$, there exists biholormorphism $\Phi$ such that the following diagram commutes
\begin{center}
\begin{tikzcd}[column sep=scriptsize]
\rho^{-1}(U) \arrow[dr, "\rho"] \arrow[rr, "\Phi"] && U\times F \arrow[dl, "\text{pr}_B"] \\
& U 
\end{tikzcd}
\end{center}
In addition to the above maps, we have $\text{pr}_F: U\times F\rightarrow F$ and $\pi: F\rightarrow \mathbb{P}^m$. For each Zariski open set $U$, let $D=B\backslash U$ and using the fact that $B$ is projective, there exists a divisor $[L]$ containing $D$ in its support. Let $s$ be a holomorphic section and a $h$ a Hermitian metric on $L$ .

The estimates on the base manifold carry over rather straightforwardly. The following lower bound holds as in the previous section:
\begin{align*}
\omega\ge C\rho^*\omega_B
\end{align*}
as well as an analogous version of Lemma \ref{base_control}:
\begin{align*}
\omega|_H\le \frac{C}{\rho^*|s|_h^{2\alpha}}(\rho^*\omega_B)|_H,
\end{align*}
which follows straightforwardly using the fact that 
\begin{align}
\Delta\log(\rho^* |s|_h^{2\alpha})\le C\tr_\omega \rho^*\omega_B\le C\label{section_bound}
\end{align} 
where the first inequality uses the fact that $\sqrt{-1}\partial\bar{\partial}\log(\rho^*|s(x_0)|^{2\alpha}_h) = \sqrt{-1}\partial\bar{\partial}\log(|s(b_0)|^{2\alpha}_h)$ is some fixed quantity on $B$ and the second inequality follows from Lemma \ref{metric_lower_bound}.
In the fiber direction, the estimates for the diameter bound need only be obtained over subsets of a finite number of Zariski open sets covering the base manifold by compactness. Hence, it suffices to prove the estimates on a subset of a Zariski open set trivializing the fiber. Let us denote 
\begin{align*}
U_{1/2}:=\{y\in U : |s(y)|_h^2>1/2\}.
\end{align*}
We will now describe how to show the analogous estimates on each $U_{1/2}$. As before, let $\pi: F\rightarrow \mathbb{P}^m$ be our blow-up map. 
It can be shown that the particular holomorphic vector fields we chose in the previous section on $\mathbb{P}^m$ indeed extend to smooth holomorphic vector fields on $\rho^{-1}(U_{1/2})$ by pulling back by $\Phi^*\pr^*_F\pi^*$. In order to make the quantity to which we apply the maximum principle global, we can consider $|\Phi^*\pr*_F\pi^* V|^2_\omega|s|^{2\alpha}_h$ for each vector field $V$ on $\mathbb{P}^m$ and some $\alpha>0$. The analogous estimates on the fiber now will be of the form:
\begin{align*}
\omega|_F \le \frac{C}{\rho^*|s|_h^{2\alpha} r^2} (\Phi^*\pr^*_F \pi^*\omega_{\mathbb{P}^m})|_F
\end{align*}
for some $\alpha>0$.
The quantities we will use in the maximum principle arguments will simply have an additional $\log(\rho^*|s|^{2\alpha}_h)$ term which we can deal with as in Equation \eqref{section_bound}. 

Now, since $T<\infty$ and since $|s|^2_h>1/2$ on $U_{1/2}$, we have that on each $U_{1/2}$ that 
\begin{align*}
\omega|_F \le \frac{C}{r^2}(\Phi^* \pr^*_F \pi^* \omega_{\mathbb{P}^m})|_F
\end{align*}

We remark that we are using the fact that the fiber bundles trivialize over one Zariski open set when we assume the existence of a holomorphic section $s$ vanishing outside of $U$.

\section{Acknowledgements}
The author would like to first and foremost thank her thesis advisor Ben Weinkove for guiding her towards this problem and for his advice and encouragement. The author would also like to thank Gregory Edwards, John Lesieutre, Nicholas McCleerey, Mihnea Popa and Valentino Tosatti and for many helpful discussions around ideas related to this paper.

\bibliography{bib_for_all}
\end{document}